\theoremstyle{thmstyleone}%
\newtheorem{theorem}{Theorem}
\newtheorem{proposition}[theorem]{Proposition}%
\newtheorem{lemma}[theorem]{Lemma} 
\newtheorem{corollary}[theorem]{Corollary} 
\newtheorem{remark}[theorem]{Remark} 
\theoremstyle{assumption}
\DeclareMathOperator*{\argmin}{arg\,min}
\newcommand{\Complex}{\mathbb{C}}
\renewcommand{\R}{\mathbb{R}}
\newcommand{\N}{\mathbb{N}}
\newcommand*{\defeq}{\mathrel{\vcenter{\baselineskip0.5ex \lineskiplimit0pt\hbox{\scriptsize.}\hbox{\scriptsize.}}} =}
\newcommand{\ts}{\textstyle}
\newcommand{\re}{\mathrm{Re}}
\def\BibTeX{{\rm B\kern-.05em{\sc i\kern-.025em b}\kern-.08em T\kern-.1667em\lower.7ex\hbox{E}\kern-.125emX}}
\begin{document}

\title[Wirtinger gradient descent methods for low-dose Poisson phase retrieval]{Wirtinger gradient descent methods for low-dose Poisson phase retrieval}


\author[1,2]{\fnm{Benedikt} \sur{Diederichs}}
\author[1,3]{\fnm{Frank} \sur{Filbir}}
\author*[1,3]{\fnm{Patricia} \sur{Römer} \email{patricia.roemer@tum.de}}
\affil[1]{\small  \orgdiv{Mathematical Imaging and Data Analysis, Institute of Biological and Medical Imaging}, \orgname{Helmholtz Center Munich}} 
\affil[2]{\small \orgdiv{Department of Chemistry and Center for NanoScience}, \orgname{Ludwig Maximilian University of Munich}}
\affil[3]{\small \orgdiv{Department of Mathematics}, \orgname{Technical University of Munich}}


\abstract{
The problem of phase retrieval has many applications in the field of optical imaging. Motivated by imaging experiments with biological specimens, we 
primarily consider the setting of low-dose illumination where Poisson noise plays the dominant role. In this paper, we discuss gradient descent algorithms 
based on different loss functions adapted to data affected by Poisson noise, in particular in the low-dose regime. Starting from the maximum log-likelihood function for the
Poisson distribution, we investigate different regularizations and approximations of the problem to design an algorithm that meets the requirements that are 
faced in applications. In the course of this, we focus on low-count measurements. For all suggested loss functions, we study the convergence of the 
respective gradient descent algorithms to stationary points and find constant step sizes that guarantee descent of the loss in each iteration. Numerical 
experiments in the low-dose regime are performed to corroborate the theoretical observations.
}

\keywords{phase retrieval, Poisson noise, low-dose imaging, optimization, gradient descent.}



\pacs[MSC Classification]{78A46, 78M50, 90C26, 65K10.}

\maketitle

\section{Introduction} \label{sec:Intro}
Phase retrieval is a fundamental problem particularly in diffraction imaging techniques, where intensity measurements of several diffraction patterns of an 
object are recorded. In many instances, these measurements are taken in the far-field distance. It can be shown that such far-field intensity measurements are 
given by the squared absolute value of the Fourier transform of the object. To make this more concrete suppose a detector with $N$ pixels is placed in the far-field 
distance. A discretized form of the signal at the detector plane is then given by the discrete Fourier transform 
\begin{equation*}
\sum_{n=0}^{N-1} x_n  e^{-2\pi i\, k\, n/N} = \langle x,u_k\rangle,
\end{equation*}
with vectors $u_k=(e^{2\pi i\, k\, n/N})_{n=0}^{N-1}$ and $x=(x_n)_{n=0}^{N-1}$. The latter is a discrete representation of the object which we would like to reconstruct. 
According to this model, the vector $u_k$ can be related to the $k$-th frequency, or equivalently, with the $k$-th pixel of the detector. As mentioned above, the 
detector itself measures only intensities, that is, it records only the squared modulus of $\langle x, u_k\rangle$. Consequently, we are faced with the challenging 
problem of reconstructing $x$ from data of the form $\vert\langle x,u_k\rangle\vert^2,\ k=0,\dots, N-1$. 

It is known, however, that this data set does not contain enough information to make the inverse problem uniquely solvable \cite{beinert2015ambiguities}, i.e., it is not sufficient to probe the object $x$ 
only by the pure states $u_k, \ k=0,\dots, N-1$. To avoid these difficulties, we have to insert a certain amount of redundancy into our data set.  In order to introduce a sufficient portion  
of redundancy, one replaces the set of measurement vectors $u_k$ by an over-complete system of vectors $a_k\in\mathbb{C}^N, \ k=1,\dots, m$ with $m \gg N$, and measures 
$\vert\langle x, a_k\rangle\vert^2,\, k=1,\dots, m,$ instead of only $\vert\langle x,u_k\rangle\vert^2,\, k=0,\dots, N-1$. A prominent realization  for this approach is the so-called 
far-field ptychography. This method uses measurement vectors of the form $a_{k,\ell}=(w_{n,\ell}\,e^{-2\pi i\, k\, n/N})_{n=0}^{N-1}$ with a mask $w_{n,\ell}$, 
which is given as translates $(w_{n-\ell\, {\rm mod}\, N})_{n=0}^{N-1}$ of a vector $w=(w_n)_{n=0}^{N-1}$ with short support.
The number of translates is arranged such that the object is scanned by the mask in a way that for adjacent scanning positions the supports of the mask 
overlap with each other by a certain fraction. We will not discuss ptychography further, instead we refer to \cite{rodenburg2019ptychography, pfeiffer2018x, thibault2009probe}.

Motivated by ptychography, we consider the following  reconstruction problem. Given data of the form 
\begin{equation} \label{PR}
\hat{y}_{i} = \left\vert\left\langle x, a_{i}\right\rangle \right\vert^{2}, \quad i = 1,\ldots, m,
\end{equation}
we have to reconstruct the vector $x\in\mathbb{C}^n$. This is the phase retrieval problem in its mathematical abstract form. The problem attracted a lot of 
attention during the last decades and a number of fundamental contributions were made \cite{fienup1982phase, balan2006signal, candes2015phase_matrixcompletion, sun2018geometric}. The first fundamental problem is that of 
uniqueness. That is the question of how large $m$ has to be in order to make the mapping $\mathbb{C}^n\to\mathbb{C}^m,\ 
x\mapsto (\vert\langle x, a_i\rangle\vert^2)_{i=1}^m$ injective. This problem was considered by several authors under different assumptions on  $x$ and on the 
vectors $a_i$. We refer to the fundamental papers \cite{balan2006signal, conca2015algebraic} for further discussion. Our focus lies on the reconstruction of the 
vector $x$ given $\hat{y}_i,\, i=1,\dots, m$, and we will consider this problem under specific assumptions on the data set, which are again motivated by specific 
constraints in the experimental setup. Before going into the details of those, let us first recall the reconstruction techniques based on a variational approach. The 
variational method tries to recuperate $x$ via a minimization 
\begin{equation}\label{optimization_problem} 
x=\argmin_{z} \ \mathcal{L}(z), 
\end{equation}
using a suitable loss function $\mathcal{L}:\mathbb{C}^n\to\R$. For determining $x$, or at least a good approximation,
usually gradient descent methods are applied to the problem \eqref{optimization_problem}, i.e., 
\begin{equation*}
z_{k+1}=z_k-\mu\,\nabla\mathcal{L}(z_k),\ k\geq 0,
\end{equation*}
with learning rate $\mu > 0$ and some appropriate initial vector $z_0$. For the phase retrieval problem, the loss function is of the form $\mathcal{L}(z) = \ell\circ\varphi(z)$, 
with a  suitable function $\ell:\R^m\to\R$ and $\varphi:\Complex^n\to\R^m,\, z\mapsto (\vert\langle a_i, z\rangle\vert^2)_{i=1}^m$. Note also that the function $\varphi$ 
usually makes the problem non-convex.

A frequently used loss function for reconstruction of $x$ is the least squares loss 
\begin{equation*}
\mathcal{L}(z)=\sum_{i=1}^m\left\vert \vert\langle a_i,z\rangle\vert^2-\hat{y}_i \right\vert^2,    
\end{equation*}
or some regularized variants of it. Gradient descent algorithms for the corresponding minimization problem which apply Wirtinger calculus were 
investigated to some extent. These methods, now known as Wirtinger flow algorithms, were studied first by E. Cand{\' e}s, X. Li, and M. Soltanolkotabi in 
\cite{candes2015phase}. Different variants were further discussed by other authors \cite{zhang2017nonconvex, wang2017solving,chen2017solving,kolte2016phase, wang2017scalable, gao2020perturbed, tan2019phase, huang2022linear, romer2021randomized}. 
These works study the convergence of stochastic and non-stochastic gradient descent algorithms involving Wirtinger derivatives for different loss functions in case of random (Gaussian) 
measurement vectors $a_i$. Apart from convergence guarantees for random measurements, \cite{xu2018accelerated} and \cite{filbir2023image} analyze convergence of gradient descent algorithms to stationary points of an amplitude-based loss function for any type of measurement vectors, random or deterministic.

In all practical relevant measurement scenarios, the  data $\hat{y}_i$ is corrupted by some sort of noise. Hence, we are given 
perturbed values $y_i$ instead of $\hat{y}_i$. The perturbation may have many sources such as thermal noise, read-out noise, background noise, among others 
\cite{chang2019advanced}. The problem of a certain type of background noise was recently tackled by one of the authors in \cite{melnyk2024background}.  Here, 
we concentrate on the perturbation which is caused by the operation mode of the detector. All modern detectors, such as CCD cameras, are ultimately 
counting devices. That means, the measurement process can be modelled as a counting process and can, therefore, mathematically be formulated in 
terms of a Poisson distributed discrete random variable, viz. 
\begin{equation*}
y_{i} \sim \text{Poisson}\left(\left\vert\left\langle a_{i},x\right\rangle \right\vert^{2}\right), \quad i = 1,\ldots, m, 
\end{equation*}
where $\vert\langle a_i,x\rangle\vert^2$ is the ground-truth. This means that the probability that $y_i$ particles are counted at position $i$ 
given the ground-truth $\vert\langle a_i,x\rangle\vert^2$ is $\frac{1}{y_i!}\, e^{-\vert\langle a_i,x\rangle\vert^2}\, (\vert\langle a_i,x\rangle\vert^2)^{y_i}$. In order to incorporate the fact that the measurement process is a Poisson process into a variational reconstruction method, it is necessary to adapt the loss function accordingly. 
We follow a maximum likelihood approach and replace the least squares loss function by the Poisson log-likelihood loss which reads as 
$$
\mathcal{L}_P(z)=\sum_{i=1}^m \vert\langle a_i,z\rangle\vert^2-y_i\log(\vert\langle a_i,z\rangle\vert^2).
$$
The minimization problem \eqref{optimization_problem} for the Poisson likelihood loss $\mathcal{L}_P$ was studied by many authors, see for 
example \cite{thibault2012maximum, yeh2015experimental, hohage2016inverse, chen2017solving, chang2018total, fatima2022pdmm} and references therein. 

Our motivation to reconsider the minimization problem \eqref{optimization_problem} with Poisson log-likelihood $\mathcal{L}_P$ originates in specific diffraction 
imaging setups. For imaging biological tissue like cells, viruses, etc., it is generally not possible to work with a radiation beam of high intensity as such 
exposure would destroy the object instantaneously \cite{glaeser1971limitations, wang2009radiation, 2024lowdosecryo}. Therefore, it is essential to perform the measurement with a beam of suitably low intensity, 
which in turn leads to a weak signal at the detector. In realistic measurement scenarios, the counting rate can be in the range below $10$ illumination particles per pixel,  
and at a number of pixels it can be even zero. This low-count scenario leads to serious problems with respect to the 
gradient descent reconstruction process as it causes a singularity in the gradient of $\mathcal{L}_P$. In order to take these problems into account, we have to 
introduce a regularization technique which deals with these singularities. Algorithmic approaches for solving phase retrieval problems with low-dose Poisson noisy data were also considered in \cite{pelz2017low, godard2012noise, li2022poisson}.

The outline of the paper is as follows. In \Cref{WF_theory}, we summarize the theory on Wirtinger derivatives and some fundamental results on the 
convergence of the gradient descent algorithm with Wirtinger derivatives. \Cref{problem_formulation} contains a discussion on the choice of loss 
function in the optimization problem formulated for solving the phase retrieval problem with Poisson noisy low-dose data. In \Cref{WF_convergence}, we present a convergence analysis for the algorithms involving the different loss functions. We provide numerical justification of our theoretical results in \Cref{numerics}. In \Cref{conclusion}, we summarize our results and conclude with a brief outlook on further possible model adjustments.

\section{Gradient descent with Wirtinger derivatives} \label{WF_theory}

We start with presenting some theory on the Wirtinger calculus. Consider a function 
\begin{equation*}
f(z)=u(x,y)+i v(x,y),\ z=x+i y,\ x,y\in\R^n,    
\end{equation*}
with real-valued and 
differentiable functions $u$ and $v$. Using conjugate variables $z=x+i y$ and $\bar{z}=x-i y$, the function $f$ can be considered as a function of 
variables $z$ and $\bar{z}$. Since $u$ and $v$ are differentiable, the 
function $f(z,\bar{z})$ is holomorphic w.r.t. $z$ for fixed $\bar{z}$ and vice versa. The Wirtinger calculus expresses 
the derivatives of $f$ w.r.t. the real variables $x$ and $y$ in terms of the conjugate variables $z$ and $\bar{z}$ treating them as independent.  
The Wirtinger derivatives of $f$ are given as   
\begin{equation*}
\partial_z f={\ts\frac{1}{2}}\,(\partial_x f -i \partial_y f),\quad \partial_{\bar{z}} f={\ts\frac{1}{2}}\,(\partial_x f +i \partial_y f).
\end{equation*}
This implies the relation 
\begin{equation}\label{eq:Pre0a}
\overline{\partial_z f}=\partial_{\bar{z}}\bar{f},\quad\text{and}\quad \overline{\partial_{\bar z}f}=\partial_{z}\bar{f}.
\end{equation}
 The Wirtinger derivatives $\partial_zf$ and $\partial_{\bar{z}} f$ can also be expressed as 
\begin{align*}
\partial_z f
=\partial_z f(z,\bar{z})\big\vert_{\bar{z}=const.}=\begin{bmatrix} \partial_{z_1} f(z,\bar{z}),\dots, \partial_{z_n} f(z,\bar{z})\end{bmatrix}\big\vert_{\bar{z}=const.},\\[2ex]
\partial_{\bar{z}} f
=\partial_{\bar{z}} f(z,\bar{z})\big\vert_{z=const.}=\begin{bmatrix} \partial_{\bar{z}_1} f(z,\bar{z}),\dots, \partial_{\bar{z}_n} f(z,\bar{z})\end{bmatrix}
\big\vert_{z=const.}. 
\end{align*}
Consequently, the Wirtinger gradient and Wirtinger Hessian are given as 
\begin{equation}\label{eq:Pre1}
\nabla f(z)=\begin{pmatrix}(\partial_z f)^\ast\\ (\partial_{\bar{z}} f)^\ast\end{pmatrix},\quad 
\nabla^2f(z)=\begin{pmatrix}
                      \partial_{z}(\partial_z f)^\ast &\partial_{\bar{z}}(\partial_z f)^\ast\\[1ex]
                      \partial_z(\partial_{\bar{z}} f)^\ast&\partial_{\bar{z}}(\partial_{\bar{z}}f)^\ast
                     \end{pmatrix}.
\end{equation} 
In case that $f$ is a real-valued function, i.e., $f(z)=u(x,y)$, the relations in \eqref{eq:Pre0a} provide   
\begin{equation*}
\partial_{\bar{z}} f=\overline{\partial_z f},\quad \partial_{\bar{z}}(\partial_{\bar{z}}f)^\ast=\overline{\partial_z(\partial_z f)^\ast},\quad 
\partial_{z}(\partial_{\bar{z}} f)^\ast=\overline{\partial_{\bar{z}}(\partial_z f)^\ast}. 
\end{equation*}
It is more convenient to use the following simplified notation  
$$
\nabla_z f\defeq(\partial_z f)^\ast,\quad \nabla^2_{z,z} f\defeq\partial_{z}(\partial_z f)^\ast, \  \text{resp.} \quad \nabla^2_{z,\bar{z}} f\defeq\partial_{z}(\partial_{\bar{z}} f)^\ast.
$$
The second-order Taylor polynomial of $f$ at a point $z_0$ is 
$$
P_f(v,z_0)=f(z_0)+(\nabla f(z_0))^\ast \begin{pmatrix} v\\ \bar{v}\end{pmatrix}\, +\, \begin{pmatrix} v\\ \bar{v}\end{pmatrix}^\ast\, \nabla^2f(z)\, 
\begin{pmatrix} v\\ \bar{v}\end{pmatrix}.
$$
In case of  a real-valued function $f$, the quadratic term of the Taylor polynomial $P_f$ can be expressed as
\begin{equation}\label{eq:Pre2}
\begin{pmatrix} v\\ \bar{v}\end{pmatrix}^\ast\, \nabla^2f(z)\, \begin{pmatrix} v\\ \bar{v}\end{pmatrix}=2\re(v^\ast \nabla^2_{z,z}f(z)\, v)+2\re(v^\ast\, 
\nabla^2_{\bar{z},z}f(z)\, \bar{v}).
\end{equation}

In this paper, we focus on real-valued functions $f$. For minimizing such a function $f$, we apply gradient descent 
\begin{equation}\label{eq:Pre1a}
z_{k+1}=z_{k}-\mu_k\, \nabla_z f(z_{k}) 
\end{equation}
with some appropriate initial vector $z_0\in \Complex^n$. The parameter $\mu_k > 0$ is called step size or learning rate. It can be chosen to be constant or adaptive, preferably such that descent in every iteration is guaranteed, i.e., $f(z_{k+1})\leq f(z_k)$ for all $k \geq 0$. The proof of the following result can be found in \cite{filbir2023image}.

\begin{proposition} \label{proposition_stepsize_convergence}
Let $\ b \in \R, \ f: \Complex^{n} \rightarrow [b,\infty),$ be a twice Wirtinger differentiable function with a uniformly bounded Hessian, i.e., 
\begin{align*}
    \begin{pmatrix} v\\ \overline{v}
\end{pmatrix}^{*} \nabla^{2} f(z) \begin{pmatrix} v\\ \overline{v}
\end{pmatrix} \leq L \left\|  \begin{pmatrix} v\\ \overline{v} \end{pmatrix}  \right\|_{2}^{2}
 \end{align*}
for all $z,v \in \Complex^{n}$, with a constant $L > 0$ independent of $z$. Let the sequence $(z_{k})_{k\geq 0}$ be generated by the update \eqref{eq:Pre1a} with an arbitrary initialization $z_0 \in \Complex^{n}$. If $0 < \mu \leq L^{-1}$, then 
\begin{align*}
    f(z_k) - f(z_{k+1}) \geq \mu \left\|\nabla_{z} f (z_{k+1})\right\|_{2}^{2} 
\end{align*}
for all $k \geq 0$. 
Then, if $f$ has compact sublevel sets $L_{s}(f) = \left\{z \in \Complex^{n}: f(z) \leq s\right\}$, the Wirtinger flow algorithm \eqref{eq:Pre1a} is guaranteed to converge to a stationary point of $f$.
\end{proposition}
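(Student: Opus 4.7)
My plan is to obtain the descent inequality from a second-order Wirtinger Taylor expansion of $f$ along the segment $[z_k,z_{k+1}]$, and then deduce convergence to a stationary point by a standard telescoping and compactness argument. Setting $v_k \defeq z_{k+1}-z_k = -\mu\,\nabla_z f(z_k)$, I would apply Taylor's theorem with remainder to $f$, viewed as a function of the conjugate variables $(z,\bar z)$, to write
\begin{equation*}
f(z_{k+1}) = f(z_k) + (\nabla f(z_k))^{\ast}\begin{pmatrix} v_k\\ \bar v_k\end{pmatrix} + \frac{1}{2}\begin{pmatrix} v_k\\ \bar v_k\end{pmatrix}^{\ast}\nabla^{2}f(\xi_k)\begin{pmatrix} v_k\\ \bar v_k\end{pmatrix}
\end{equation*}
for some $\xi_k$ on the segment from $z_k$ to $z_{k+1}$ (up to the normalization implicit in the paper's definition of $P_f$).

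Because $f$ is real-valued, the identities in \eqref{eq:Pre0a} collapse the linear term to $2\re\bigl((\partial_z f(z_k))\,v_k\bigr)$, which upon substituting $v_k = -\mu(\partial_z f(z_k))^{\ast}$ equals $-2\mu\,\|\nabla_z f(z_k)\|_2^{2}$. The Hessian hypothesis together with $\|(v_k,\bar v_k)^{\top}\|_2^{2} = 2\|v_k\|_2^{2}$ bounds the quadratic term by $L\mu^{2}\|\nabla_z f(z_k)\|_2^{2}$. Combining these two estimates and enforcing $\mu\leq L^{-1}$ yields $f(z_k)-f(z_{k+1}) \geq \mu(2-L\mu)\|\nabla_z f(z_k)\|_2^{2} \geq \mu\|\nabla_z f(z_k)\|_2^{2}$, which is the target descent inequality; the right-hand side evaluated at $z_{k+1}$ as written in the statement can then be recovered from the Lipschitz bound on $\nabla_z f$ that the Hessian hypothesis implies.

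For the convergence claim, I would telescope the descent inequality: boundedness below by $b$ gives $\sum_{k\geq 0}\|\nabla_z f(z_k)\|_2^{2}\leq \mu^{-1}(f(z_0)-b)<\infty$, hence $\nabla_z f(z_k)\to 0$. Monotonicity of $f(z_k)$ confines the iterates to the sublevel set $L_{f(z_0)}(f)$, which is compact by assumption, so every subsequence admits a further convergent sub-subsequence with some limit $z^{\ast}$, and continuity of $\nabla_z f$ forces $\nabla_z f(z^{\ast})=0$. I expect the main obstacle to be the bookkeeping inside the Wirtinger Taylor expansion---in particular tracking the factor $2$ coming from $\|(v,\bar v)^{\top}\|_2^{2}=2\|v\|_2^{2}$ and the real-part reduction \eqref{eq:Pre2}---so that the Hessian bound plugs in with precisely the constants claimed. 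Upgrading subsequential convergence to convergence of the whole sequence would need an additional structural condition and is not required for the conclusion as stated.
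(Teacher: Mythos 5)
First, note that the paper itself gives no proof of this proposition; it only points to \cite{filbir2023image}, so there is no in-paper argument to compare against. Your strategy is the standard descent-lemma route, and it correctly delivers the two things the rest of the paper actually uses: the per-step decrease $f(z_k)-f(z_{k+1})\ge\mu(2-L\mu)\,\|\nabla_z f(z_k)\|_2^2\ge\mu\,\|\nabla_z f(z_k)\|_2^2$ for $\mu\le L^{-1}$ (your constants come out right once the factor $\tfrac12$ missing from the paper's displayed Taylor polynomial is restored, which you do), and the telescoping-plus-compactness conclusion that the gradients vanish along the iteration and every accumulation point of $(z_k)$ in the compact sublevel set is stationary. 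Your closing caveat that this yields subsequential rather than full-sequence convergence is also well taken.

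The genuine gap is in your final step for the displayed inequality: the proposition asserts the bound with $\|\nabla_z f(z_{k+1})\|_2^2$, the gradient at the \emph{new} iterate, and you propose to pass from $z_k$ to $z_{k+1}$ via ``the Lipschitz bound on $\nabla_z f$ that the Hessian hypothesis implies.'' This does not work as stated, for two reasons. First, the hypothesis is only a one-sided bound on the Hessian quadratic form; it does not control $\|\nabla^2 f\|$ and hence gives no Lipschitz bound for $\nabla_z f$. With unbounded negative curvature the gradient at $z_{k+1}$ can be far larger than at $z_k$ while the upper curvature bound still holds — already a one-dimensional $f(x)=\epsilon x-Mx^2$ (suitably capped below) violates $f(x_0)-f(x_1)\ge\mu\,|f'(x_1)|^2$ for large $M$. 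Second, even granting a two-sided bound, the resulting estimate $\|\nabla_z f(z_{k+1})\|\le(1+L\mu)\,\|\nabla_z f(z_k)\|\le 2\,\|\nabla_z f(z_k)\|$ only yields $f(z_k)-f(z_{k+1})\ge\tfrac{\mu}{4}\,\|\nabla_z f(z_{k+1})\|_2^2$, not the constant $\mu$ claimed. So either the inequality should be stated and proved at $z_k$ — which, as you essentially show, is all the convergence argument needs — or the $z_{k+1}$ version requires a genuinely different argument (or additional hypotheses, as presumably in the cited source); it cannot be obtained by post-processing your $z_k$ bound in the way you sketch.
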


We now specify the requirements of this result to loss functions which are relevant in our context. As mentioned above, the loss functions we are going to consider are given as a 
composition $\ell\circ\varphi$ with a smooth function $\ell:\R\to\R$ and $\varphi:\Complex^n\to [0,\infty),\  \varphi(z)=\vert\langle a,z\rangle\vert^2$.  For such functions, we obtain the following bound on the Hessian.

\begin{lemma} \label{lemma_bound_hessian}
Let $\ell: \R \rightarrow \R$ be twice differentiable and $\varphi: \Complex^{n} \rightarrow [0,\infty), \ \varphi(z)=\left\vert\left\langle a,z\right\rangle\right\vert^{2}$ with $a \in \Complex^{n}$. Then 
\begin{align*}
 \begin{pmatrix} 
 v\\ \overline{v}
\end{pmatrix}^{*}
\nabla^{2}(\ell \circ \varphi)(z) 
\begin{pmatrix} 
v\\ \overline{v}
\end{pmatrix} 
\leq   \left(2 \ell''\bigl(\left\vert\left\langle a,z\right\rangle\right\vert^{2}\bigr) \left\vert\left\langle a,z\right\rangle\right\vert^{2} + \ell'\bigl(\left\vert\left\langle a,z\right\rangle\right\vert^{2}\bigr) \right) \left\|a\right\|_{2}^{2}\ \left\| \begin{pmatrix} v\\ \overline{v} \end{pmatrix}  \right\|_{2}^{2}
\end{align*}
for all $z,v \in \Complex^{n}$.
\end{lemma}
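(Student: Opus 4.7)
The plan is to compute the two Wirtinger Hessian blocks $\nabla^{2}_{z,z}(\ell\circ\varphi)$ and $\nabla^{2}_{\bar z,z}(\ell\circ\varphi)$ explicitly, substitute them into the real-valued-function formula \eqref{eq:Pre2}, and then bound the resulting quadratic form by $|a|$-multiples of $\|v\|_{2}^{2}$ via Cauchy–Schwarz and $|\re\zeta|\le|\zeta|$.

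First I would set $w \defeq \langle a,z\rangle$, so that $\varphi(z)=w\bar w$, and treat $w$ as a function of $z$ alone and $\bar w$ as a function of $\bar z$ alone. Using the chain rule for Wirtinger derivatives, I would compute $\nabla_{z}(\ell\circ\varphi)=\ell'(|w|^{2})\,w\,a$, and then differentiate once more: differentiating with respect to $z$ gives the diagonal block
\begin{equation*}
\nabla^{2}_{z,z}(\ell\circ\varphi)(z) = \bigl(\ell''(|w|^{2})\,|w|^{2}+\ell'(|w|^{2})\bigr)\,a\,a^{\ast},
\end{equation*}
whereas differentiating with respect to $\bar z$ yields the off-diagonal block
\begin{equation*}
\nabla^{2}_{\bar z,z}(\ell\circ\varphi)(z) = \ell''(|w|^{2})\,w^{2}\,a\,a^{T}.
\end{equation*}
The bookkeeping here is the main technical nuisance, since one has to keep $\partial_{z_j}\bar w=0$ and $\partial_{\bar z_j}w=0$ straight and correctly apply the definitions $\nabla^{2}_{z,z} f \defeq \partial_{z}(\partial_{z} f)^{\ast}$ and $\nabla^{2}_{\bar z,z}f \defeq \partial_{\bar z}(\partial_{z}f)^{\ast}$ from \eqref{eq:Pre1}; I would also verify the result is consistent with $\nabla^{2}_{z,\bar z}f=\overline{\nabla^{2}_{\bar z,z}f}$ which must hold because $f$ is real-valued.

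Substituting these two blocks into \eqref{eq:Pre2} gives
\begin{equation*}
\begin{pmatrix} v\\ \bar v\end{pmatrix}^{\ast}\!\!\nabla^{2}(\ell\circ\varphi)(z)\!\begin{pmatrix} v\\ \bar v\end{pmatrix} = 2\bigl(\ell''(|w|^{2})|w|^{2}+\ell'(|w|^{2})\bigr)|\langle a,v\rangle|^{2} + 2\ell''(|w|^{2})\re\bigl(w^{2}\,\overline{\langle a,v\rangle}^{2}\bigr),
\end{equation*}
where I used $v^{\ast}aa^{\ast}v=|\langle a,v\rangle|^{2}$ and $v^{\ast}aa^{T}\bar v=\overline{\langle a,v\rangle}^{2}$.

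Finally I would estimate the cross term: $|\re(w^{2}\overline{\langle a,v\rangle}^{2})|\le|w|^{2}|\langle a,v\rangle|^{2}$, so the whole expression is bounded above by $2\bigl(2\ell''(|w|^{2})|w|^{2}+\ell'(|w|^{2})\bigr)|\langle a,v\rangle|^{2}$. One Cauchy–Schwarz step, $|\langle a,v\rangle|^{2}\le\|a\|_{2}^{2}\|v\|_{2}^{2}$, followed by the identity $\|v\|_{2}^{2}=\tfrac{1}{2}\|(v,\bar v)\|_{2}^{2}$, absorbs the factor $2$ and yields precisely the claimed inequality. The only subtle point in this last step is ensuring the cross-term bound is tight enough not to lose the factor in front of $\ell'$; this is why I bound $\re(\cdot)$ by its modulus rather than by a less sharp inequality.
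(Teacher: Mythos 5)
Your proposal is correct and follows essentially the same route as the paper's proof: compute the blocks $\nabla^{2}_{z,z}(\ell\circ\varphi)$ and $\nabla^{2}_{\bar z,z}(\ell\circ\varphi)$ explicitly, insert them into \eqref{eq:Pre2}, bound the cross term by its modulus, and finish with Cauchy--Schwarz together with $\left\|\begin{pmatrix} v\\ \overline{v}\end{pmatrix}\right\|_{2}^{2}=2\left\|v\right\|_{2}^{2}$. The only cosmetic difference is that you keep the conjugate on $\langle a,v\rangle^{2}$ in the off-diagonal contribution, which the paper's displayed computation omits; this does not affect the estimate.
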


\begin{proof}[Proof.]
The Wirtinger derivative of  $(\ell \circ \varphi)(z,\overline{z}) = \ell(\overline{z}^{\mathrm{T}}aa^*z)$ is 
\begin{align*} 
\partial_z (\ell \circ \varphi)(z,\overline{z}) &= \partial_z(\ell \circ \varphi)(z,\overline{z})\mid_{\bar{z}=\text{const.}}\\  
&= \ell'(\overline{z}^{\mathrm{T}} aa^*z)\overline{z}^{\mathrm{T}}aa^{*} = \ell'(\left\vert\left\langle a,z\right\rangle\right\vert^{2})z^{*}aa^{*}.
\end{align*}
For the second derivatives we obtain 
\begin{align*} 
\partial_z\Big(\partial_z (\ell \circ \varphi)(z,\overline{z})\Big)^{*} &= \partial_z\Bigl(\ell'(\left\vert\left\langle a,z\right\rangle\right\vert^{2})aa^{*}z\Bigr)\\
&= \ell''(\left\vert\left\langle a,z\right\rangle\right\vert^{2})\left\vert\left\langle a,z\right\rangle\right\vert^{2} aa^{*} + \ell'(\left\vert\left\langle a,z\right\rangle\right\vert^{2})aa^{*},
\end{align*}
and analogously
\begin{align*}
\partial_{\bar{z}}\Big(\partial_z (\ell \circ \varphi)(z,\overline{z})\Big)^{*} = \partial_{\bar{z}}\Bigl(\ell'(\left\vert\left\langle a,z\right\rangle\right\vert^{2})aa^{*}z\Bigr)= \ell''(\left\vert\left\langle a,z\right\rangle\right\vert^{2}) \left\langle a,z\right\rangle^{2} aa^{\mathrm{T}}.
\end{align*}
In view of \eqref{eq:Pre2} we get 
\begin{align*} 
\begin{pmatrix}
v\vspace{1mm}\\
\overline{v}
\end{pmatrix}^{*} \nabla^{2}(\ell\circ \varphi)(z) \begin{pmatrix}
v\vspace{1mm}\\
\overline{v}
\end{pmatrix} &= 2 \Bigl( \ell''(\left\vert\left\langle a,z\right\rangle\right\vert^{2})\left\vert\left\langle a,z\right\rangle\right\vert^{2} + \ell'(\left\vert\left\langle a,z\right\rangle\right\vert^{2}) \Bigr) \left\vert\left\langle a,v\right\rangle\right\vert^{2}\\[4pt]
&\quad +  2 \operatorname{Re} \Bigl( \ell''(\left\vert\left\langle a,z\right\rangle\right\vert^{2}) \left\langle a,z\right\rangle^{2} \left\langle a,v\right\rangle^{2}\Bigr) \\[10pt]
& \leq 2  \Bigl( 2 \ell''(\left\vert\left\langle a,z\right\rangle\right\vert^{2})\left\vert\left\langle a,z\right\rangle\right\vert^{2} + \ell'(\left\vert\left\langle a,z\right\rangle\right\vert^{2}) \Bigr) \left\vert\left\langle a,v\right\rangle\right\vert^{2} \\[10pt]
& \leq \Bigl( 2 \ell''(\left\vert\left\langle a,z\right\rangle\right\vert^{2})\left\vert\left\langle a,z\right\rangle\right\vert^{2} + \ell'(\left\vert\left\langle a,z\right\rangle\right\vert^{2}) \Bigr) \left\|a\right\|_{2}^{2} \left\|\begin{pmatrix} v\\ \overline{v} \end{pmatrix} \right\|_{2}^{2},
\end{align*}
where we used  $\operatorname{Re}(\alpha^2\,\beta^2) \leq \left\vert \alpha \right\vert^{2}\left\vert \beta \right\vert^{2}$ for $\alpha,\beta \in \Complex$.
\end{proof}

In order to apply the convergence result of \Cref{proposition_stepsize_convergence}, we need to bound $2\,\ell''\left(x\right) \,x + \ell'\left(x\right) $
by a constant independent of $x\in[0,\infty)$ and we have to show that the level sets of $\ell\circ\varphi$ are compact. The compactness of the level sets will 
be addressed in the following lemma.  

\begin{lemma} \label{compact_levelsets}

    Let $f: \Complex^{n} \rightarrow \R, \ z \mapsto \sum_{i=1}^{m} \left\vert\left\langle a_{i},z\right\rangle\right\vert^{2}$ with $a_{i} \in \Complex^{n}, \ i = 1,\ldots,m$. Denote with $A$ the matrix with rows $a_{i}^{*}, \ i = 1,\ldots, m$. For any $z_{0} \in \Complex^{n}$, $f$ restricted to $z_{0} + \text{range}(A^{*})$ has compact level sets. 
    
\end{lemma}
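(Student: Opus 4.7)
The strategy is to prove closedness and boundedness of the restricted level sets separately, with the essential observation being that the restriction to the affine subspace $z_{0} + \text{range}(A^{*})$ precisely removes the directions along which $f$ is degenerate. Closedness is immediate: $f$ is continuous, so $L_{s}(f)$ is closed in $\Complex^{n}$, and its intersection with the closed affine subspace $z_{0} + \text{range}(A^{*})$ remains closed.

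For boundedness, I would first perform a reduction to the case $A z_{0} = 0$. Setting $V \defeq \text{range}(A^{*})$, we have $V^{\perp} = \ker(A)$, so decomposing $z_{0} = z_{0}^{\parallel} + z_{0}^{\perp}$ with $z_{0}^{\parallel} \in V$ and $z_{0}^{\perp} \in \ker(A)$ yields $z_{0} + V = z_{0}^{\perp} + V$. Thus we may replace $z_{0}$ by $z_{0}^{\perp}$ and assume $A z_{0} = 0$, which gives $f(z_{0} + u) = \|A z_{0} + A u\|_{2}^{2} = \|A u\|_{2}^{2}$ for every $u \in V$.

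Next, I would use that $A|_{V}$ is injective, since $\ker(A) \cap V = V^{\perp} \cap V = \{0\}$. Because $V$ is a finite-dimensional subspace, $A|_{V}$ admits a smallest positive singular value $\sigma > 0$ with $\|A u\|_{2} \geq \sigma \|u\|_{2}$ for all $u \in V$. Consequently, $f(z_{0} + u) \leq s$ forces $\|u\|_{2} \leq \sqrt{s}/\sigma$, so the restricted level set is bounded. Combined with closedness in the finite-dimensional ambient space, this yields compactness.

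There is no genuine obstacle here; the plan is essentially a two-line linear-algebra argument. The only conceptual point that needs attention is recognizing that the affine constraint $z \in z_{0} + \text{range}(A^{*})$ is exactly what is needed to quotient out the degeneracy of the quadratic form $f$ along $\ker(A)$, which is the direction along which $f$ fails to be coercive on all of $\Complex^{n}$.
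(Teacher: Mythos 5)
Your proof is correct and follows essentially the same route as the paper's: both decompose $z_0$ into its components along $\ker(A)$ and $\mathrm{range}(A^{*})$, observe that the kernel component contributes nothing to $\|Az\|_2$, and invoke the smallest nonzero singular value of $A$ to get coercivity of $f$ on the affine subspace. Your version merely packages the same argument a little more explicitly (separating closedness from boundedness and reducing to $Az_0=0$ first), but there is no substantive difference.
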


\begin{proof}[Proof.]

    Consider any $z \in z_{0} + \text{range}(A^{*})$. We use that $\sum_{i=1}^{m} \left\vert\left\langle a_{i},z\right\rangle\right\vert^{2} = \left\|Az\right\|_{2}^{2}$.  It is $z_0 = \hat{z}_0 + \tilde{z}_0$ with $\hat{z}_0 \in \text{ker}(A)$ and $\tilde{z}_0 \in \text{range}(A^*)$.
     The vector $\tilde{z} \defeq z - \hat{z}_0$ is the orthogonal projection of $z$ onto the range of $A^*$. As $\tilde{z}$ is orthogonal to the kernel of $A$, it is $\left\|A\tilde{z}\right\|_{2}^{2} \geq \sigma_{\min}^2 \left\|\tilde{z}\right\|_{2}^{2}$, where $\sigma_{\min}$ denotes the smallest non-zero singular value of $A$.  Then, as $\left\|Az\right\|_{2}^{2} = \left\|A\tilde{z}\right\|_{2}^{2}$, we can continue to bound
\begin{equation*}
        \left\|Az\right\|_{2}^{2} \geq  \sigma_{\min}^2 \left\|\tilde{z}\right\|_{2}^{2}
        \geq \sigma_{\min}^2 \left( \left\|z\right\|_{2}^{2} - \left\|\hat{z}_0\right\|_{2}^{2}\right).
\end{equation*}
Since $\sigma_{\min}$ and $\left\|\hat{z}_0\right\|_{2}^{2}$ are constant for all $z \in z_{0} + \text{range}(A^{*})$, this shows that $\left\|Az\right\|_{2}^{2} $ is bounded from below by a scaled and shifted version of $\left\|z\right\|_{2}^{2}$ for all $z \in z_{0} + \text{range}(A^{*})$. As $z \mapsto \left\|z\right\|_{2}^{2}$ has compact level sets, $f$ has compact level sets on $z_{0} + \text{range}(A^{*})$.
\end{proof}

\begin{remark} \label{remark_compact_levelsets}
With \Cref{compact_levelsets} we obtain that $\sum_{i=1}^{m} \ell_{i}( \left\vert\left\langle a_{i},z\right\rangle\right\vert^{2})$ has compact level sets on $z_{0} + \text{range}(A^{*})$ if the functions $\ell_{i}:[0,\infty) \rightarrow \R, \ i = 1,\ldots,m$,  are continuous, bounded from below, and satisfy $\ell_{i}(t) \rightarrow \infty$ for $t \rightarrow \infty$.    
\end{remark}

\section{Phase retrieval as an optimization problem} \label{problem_formulation}

As discussed above, our aim is to reconstruct an (approximate) solution to the phase retrieval problem \eqref{PR} under certain assumptions on the random 
nature of the measurement process. Our focus lies on those cases where the measurement process can be modelled as a Poisson distributed random 
process. In order to associate this assumption with a variational reconstruction method, we use the maximum (log-)likelihood estimation.

\subsection{Poisson log-likelihood loss}\label{chapter: loss_poisson} 
In order to maximize the (log)-likelihood function for the Poisson distributed random variable $y_i$ with ground-truth $\vert\langle a_i,x\rangle\vert^2$, we have to 
determine $z$ such that $\sum_{i=1}^m (y_i\log(\vert\langle a_i,z\rangle\vert^2-\vert\langle a_i,z\rangle \vert^2)$ is maximal. This is equivalent to determining
$ \argmin_{z}\mathcal{L}_P(z)$ with 
\begin{equation}\label{loss_poisson}
\mathcal{L}_P(z)=\sum_{i=1}^m \vert\langle a_i,z\rangle\vert^2-y_i\, \log(\vert\langle a_i,z\rangle\vert^2).
\end{equation}
As mentioned earlier, we are interested in measurement scenarios where we can have no counts at certain pixels. In those cases we need to consider the value 
$\vert\langle a_i,z\rangle\vert^2=0$, which leads to singularities in the loss function $\mathcal{L}_P$.  A simple strategy to deal with them is 
shifting the logarithmic term by a positive constant $\varepsilon>0$, i.e.,
\begin{equation}\label{regularized_Poisson_loss}
\mathcal{L}_{P,\varepsilon}(z)=\sum_{i=1}^m \vert\langle a_i,z\rangle\vert^2-y_i\, \log(\vert\langle a_i,z\rangle\vert^2 + \varepsilon).
\end{equation}
The corresponding gradient descent update rule with constant step size reads as
\begin{equation*}
z_{k+1} = z_{k} - \mu\,  \sum_{i=1}^{m} \left(1 - \frac{y_{i}}{\left\vert\left\langle a_{i},z_{k}\right\rangle \right\vert^{2} + \varepsilon}\right) \left\langle a_{i},z_{k}\right\rangle a_{i}.
\end{equation*}

A suitable choice of the parameter is, however, not obvious. In this regard, one could for example study a discrepancy principle similar to \cite{bertero2010discrepancy, bevilacqua2021nearly}. We, instead, propose to consider an approximation of the Poisson loss \eqref{regularized_Poisson_loss}. An appropriate approximation is discussed in the next section.

\subsection{Gaussian log-likelihood and variance stabilization}\label{chapter: losses_variance_stabilization}

Recall that the maximum log-likelihood loss function for a Gaussian distributed random variable $y_i$ with distribution 
$\mathcal{N}(\vert\langle a_i,x\rangle\vert^2,\sigma_i^2),\, i=1,\dots, m,$ is given as 
\begin{equation} \label{loss_gauss} 
\mathcal{L}(z) = \sum_{i=1}^{m} \frac{1}{2\sigma_i^2} \left(\left\vert\left\langle a_{i},z\right\rangle \right\vert^{2} - y_{i} \right)^{2}.
\end{equation}
Often, this loss is used for image reconstruction even if the underlying process is a counting process, i.e., the random variable is Poisson distributed. This can 
be justified in cases for which the variance parameter $\lambda$ in the Poisson distribution $Poisson(\lambda)$ is sufficiently large. In this situation, the 
central limit theorem shows that $Poisson(\lambda)$ can be approximated by a  Gaussian distribution $\mathcal{N}(\lambda,\lambda)$. However, in the low-dose scenario this approximation is no longer suitable. 

Another approach to approximate a Poisson random variable by a Gaussian random variable is the so-called variance stabilization method 
\cite{tippett19352, bartlett1936square, curtiss1943transformations, anscombe1948transformation}. This method transforms the Poissonian data such that the resulting random variable has approximately constant 
variance. The rationale of this method is as follows. Let $\lambda>0$ and 
$X\sim Poisson(\lambda)$. Assume $f:\mathbb{R}_{\geq0}\to\mathbb{R}$ is a sufficiently smooth function. Then, its first order Taylor 
approximation around the variance $\lambda$ is $f(t)\approx f(\lambda)+(z-\lambda) f^\prime(\lambda)$, and, hence, $\mathbb{V}(f(X))\approx \mathbb{V}(X)\ (f^\prime(\lambda))^2.$ In order to obtain an approximate constant variance, set the right-hand side of the latter relation equal to $\sigma^2$. This leads to $f(t)=2\sigma\sqrt{t}$. The  
approach goes back to \cite{tippett19352, bartlett1936square} and was modified by Anscombe in \cite{anscombe1948transformation} using a 
fifth order Taylor approximation in order to handle cases where the variance $\lambda$ is rather small. Anscombe considered the shifted square-root 
transform $f(t)=\sqrt{t+c}$. Following the same arguments as above, one arrives at 
\begin{align*}
    \mathbb{V}\left(\sqrt{X + c} \, \right) &\approx (\lambda + c) \cdot  \mathbb{V} \Biggl[ 1 + \frac{1}{2} \cdot \frac{X - \lambda}{\sqrt{\lambda + c}} - \frac{1}{8}  \left(\frac{X-\lambda}{\sqrt{\lambda + c}}\right)^{2} + \frac{1}{16}  \left(\frac{X-\lambda}{\sqrt{\lambda + c}}\right)^{3}\\
    &\qquad\qquad\qquad\quad - \frac{5}{128}  \left(\frac{X-\lambda}{\sqrt{\lambda + c}}\right)^{4} + \frac{7}{256}  \left(\frac{X-\lambda}{\sqrt{\lambda + c}}\right)^{5}\Biggr]\\
    &\approx \frac{1}{4} \cdot \left( 1 + \frac{\tfrac{3}{8}-c}{\lambda} + \frac{32c^{2}-52c +17}{32\lambda^{2}}\right),  
\end{align*}
which suggests to choose $c = \frac{3}{8}$ to achieve $\mathbb{V}(X)\approx 1/4$. 

However, neither the simple square-root transform nor the Anscombe transform performs well when 
$\lambda\in [0,2]$, hence, we aim for a better suited variance-stabilizing transform of square-root type $f(t)=\sqrt{t+c}$. We try to determine the parameter $c>0$ such that $\mathbb{V}(\sqrt{X+c}\ )=1/4$ by considering 
\begin{align*}
\mathbb{V}\left(\sqrt{X+c} \ \right) &= \mathbb{E}(X) + c - \left[\mathbb{E}\left(\sqrt{X+c} \ \right)\right]^{2}\\[3pt]
 &= \lambda + c - \left( \sum_{k=0}^{\infty} \sqrt{k+c} \cdot \frac{\exp(-\lambda)\lambda^{k}}{k\text{!}} \right)^{2},
\end{align*}
and obtain an approximately optimal value for $c$ by setting this equal to $1/4$. By this method, we obtain, e.g., for $\lambda = 1$ the optimal value $c \approx 0.12$ and for $\lambda = 2$ the value $c \approx 0.27$.

To cover not only one optimal choice of $c$ for one specific value of $\lambda$, we propose to consider an averaging transform
\begin{equation} \label{averaging_transform}
    f(z) = \tfrac{1}{2}\left(\sqrt{z+c_{1}} + \sqrt{z + c_{2}} \ \right)
\end{equation}
with $c_{1}, c_{2} \geq 0$. An example for such a transform is the Tukey-Freeman transform \cite{freeman1950transformations}
\begin{equation} \label{tukey_freeman}
f(z) =  \tfrac{1}{2}\left(\sqrt{z} + \sqrt{z + 1} \ \right)
\end{equation}
that is known to perform well for small $\lambda$. We advance this idea with using $c_1 = 0.12$ and $c_2 = 0.27$ in an experiment dominated by $1$ and $2$ counts. This is justified as for measurements of this size it is very likely that the underlying ground-truth is close to $1$ or $2$. The performance of this transform compared to the square-root and the Anscombe transform can be studied in \Cref{fig:variance_stabilizing_transforms}.

\begin{figure}[!h]
\centering
\includegraphics[width=0.8\textwidth]
{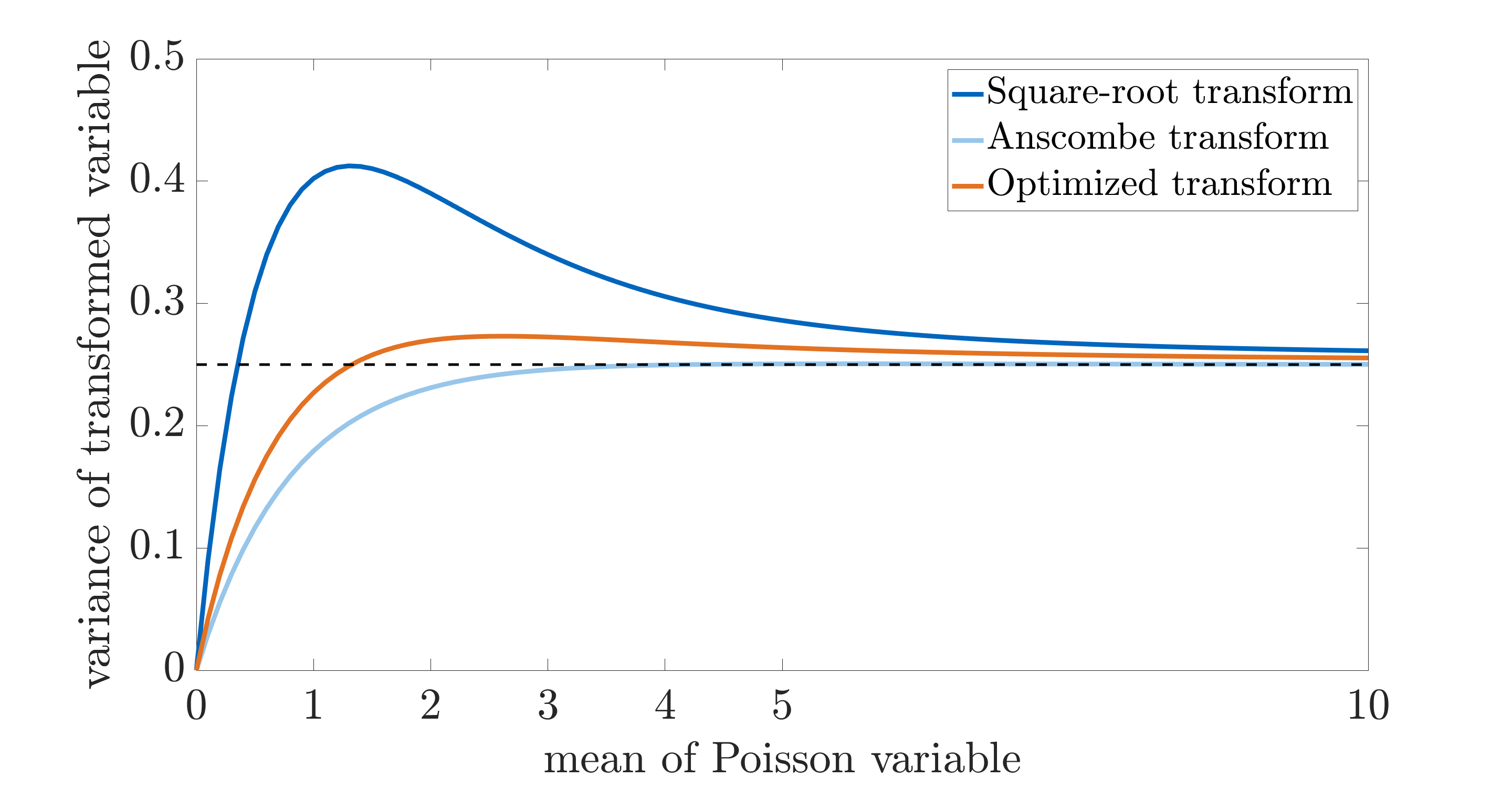}
\caption{Variance-stabilizing transforms.}
\label{fig:variance_stabilizing_transforms}
\end{figure}

By means of \Cref{fig:variance_stabilizing_transforms} we note that for $\lambda$ close to $0$ the square-root transform might be preferred over the other variance-stabilizing transforms. Also by the method described above we find that for $\lambda \rightarrow 0$ the optimal choice is $c$ close to $0$.

Returning back to the discussion on suitable loss functions, we can use any of the presented variance-stabilizing transforms $f$ and consider the loss 
\begin{equation*} 
\mathcal{L}(z) = \sum_{i=1}^{m} 2 \left( g\bigl(\left\vert\left\langle a_{i},z\right\rangle \right\vert^{2} \bigr) - f\left(y_{i}\right)\right)^{2},
\end{equation*}
which represents the log-likelihood function for a Gaussian distribution $\mathcal{N}\bigl(g\bigl(\left\vert\left\langle a_{i},z\right\rangle \right\vert^{2} \bigr), 1/4 \bigr)$, with function $g$ satisfying $g(X) = \mathbb{E}\left(f(X)\right)$.

If we consider a transform $f(x) = \sqrt{x + c}$ with $c \geq 0$ that stabilizes the variance of a Poisson random variable around $1/4$, the mean of the transformed random variable is approximately
\begin{align} \label{approximate_expectation}
    \mathbb{E}\bigl(\sqrt{X +c} \ \bigr) = \sqrt{\ \mathbb{E}\left(X + c\right) - \mathbb{V}\bigl(\sqrt{X+ c} \ \bigr) \ } \approx \sqrt{\ \mathbb{E}(X) + c - \tfrac{1}{4} \ }.
\end{align} 
Hence, we would work with a loss function
\begin{align} \label{loss_sqrt_c}
\mathcal{L}(z) = \sum_{i=1}^{m} 2 \left( \sqrt{\left\vert\left\langle a_{i},z\right\rangle \right\vert^{2} + c - \tfrac{1}{4}} - \sqrt{y_{i} + c}\right)^{2}.
\end{align}
A special case of this type of loss function is the amplitude loss
\begin{align} \label{amplitude_loss}
\mathcal{L}(z) = \sum_{i=1}^{m}2\left( \sqrt{\left\vert\left\langle a_{i},z\right\rangle \right\vert^{2} + \varepsilon} - \sqrt{y_{i}} \right)^{2},
\end{align}
with $\varepsilon > 0$, which has attracted interest recently in the phase retrieval community, see \cite{xu2018accelerated, filbir2023image}. This loss function is motivated by the Gaussian model involving a square-root transform. However, this formulation disregards the relation \eqref{approximate_expectation} but uses the coarse approximation $\mathbb{E}\bigl(\sqrt{X} \ \bigr) \approx \sqrt{ \ \mathbb{E}(X) \ }$.

A loss function based on the maximum likelihood model for a Gaussian distribution using the Anscombe transform, i.e., with $c = \frac{3}{8}$, was considered before, e.g., in \cite{konijnenberg2017study}, and studied for mixed Poisson-Gaussian noise in \cite{zhang2017fourier}. These works also used the approximation $\mathbb{E}\bigl(\sqrt{X + 3/8} \ \bigr) \approx \sqrt{  \mathbb{E}(X+3/8)  }$ instead of \eqref{approximate_expectation}.

While a loss function \eqref{loss_sqrt_c} fits very well the problem setting we want to consider, applying a gradient descent method to this loss function is problematic if $c \leq 1/4$. The considered function is not well-defined for $z$ with $\left\vert\left\langle a_{i},z\right\rangle \right\vert^{2} \in \left[0, 1/4 - c\right)$, and not differentiable at $\left\vert\left\langle a_{i},z\right\rangle \right\vert^{2} = 1/4 - c$. Hence, we need to regularize again to counteract this issue. We decide to also utilize the coarse approximation $\mathbb{E}\left(\sqrt{X + c} \ \right) \approx \sqrt{  \mathbb{E}(X+c)  }$ and consider
\begin{align} \label{loss_optimized_transform_regularized}
\mathcal{L}(z) = \sum_{i=1}^{m}2\left(  \sqrt{\left\vert\left\langle a_{i},z\right\rangle \right\vert^{2} + c}  - \sqrt{y_{i} + c } \right)^{2}.
\end{align}

The idea of adapting the expectation of the distribution accordingly is also not reflected in works as, e.g., \cite{gao2020perturbed}. The intention of those approaches is to smoothen the loss function. We, additionally, aim at formulating the loss such that it is close to the Poissonian model. However, in an experiment with too low illumination dose, resulting in ground-truth values much smaller than $1$, the variance after variance-stabilizing transform is rather close to $0$. Hence, the approximation \eqref{approximate_expectation} is not good for an extreme low-dose experiment and, thus, we abstain from subtracting $1/4$ in the mean approximation and prefer using \eqref{loss_optimized_transform_regularized}.

More generally, we can also use an averaging transform \eqref{averaging_transform},
with values $c_{1}, c_{2} \geq 0$, for variance stabilization. For the same reasons as before, we use the coarse approximation 
\begin{equation*}
\mathbb{E}\left(\sqrt{X + c_1} + \sqrt{X + c_2} \ \right) \approx \sqrt{\ \mathbb{E}(X) + c_1\ }  + \sqrt{\ \mathbb{E}(X) + c_2\ },
\end{equation*}
and consider loss functions of the form
\begin{equation*} 
\mathcal{L}(z) = \sum_{i=1}^{m}\frac{1}{2}\Bigl( \sqrt{ \ \left\vert\left\langle a_{i},z\right\rangle \right\vert^{2} + c_1 \ }  + \sqrt{ \ \left\vert\left\langle a_{i},z\right\rangle \right\vert^{2} + c_2 \ } - \sqrt{y_{i} + c_1 } -   \sqrt{y_{i} + c_2  } \ \Bigr)^{2}.
\end{equation*}

All discussed variance-stabilizing transforms result in a variance approximately equal to $1/4$ only for distribution parameters $\lambda > 0$. Hence, it might not be advisable to use the loss functions resulting from the variance-stabilizing transforms in case of ground-truth values equal or close to $0$. In an experiment, the ground-truth values are not known. However, if $y_{i} = 0$, it is likely that the corresponding ground-truth is close to $0$ and we wish to avoid variance-stabilizing transforms for these instances. Note that for $y_{i} = 0$ no regularization is required in the exact Poisson log-likelihood model as the loss function reduces to the term $\left\vert\left\langle a_{i},z\right\rangle \right\vert^{2}$. Therefore, we propose to consider 
\begin{align} \label{0_adaption} 
\mathcal{L}_0(z) = \sum_{i=1}^{m} & \ \mathds{1}_{y_{i} > 0} \cdot  \frac{1}{2}\left( \sqrt{ \ \left\vert\left\langle a_{i},z\right\rangle \right\vert^{2} + c_1\ }  + \sqrt{ \ \left\vert\left\langle a_{i},z\right\rangle \right\vert^{2} + c_2 \ } - C\ \right)^{2}\\[4pt] &+  \ \mathds{1}_{y_{i} = 0} \cdot \left\vert\left\langle a_{i},z\right\rangle \right\vert^{2}, \notag
\end{align}
with $c_2 \geq c_1 > 0, \ C \geq 0$, if one is working with low-dose data dominated by zero measurements.

\section{Convergence analysis of gradient descent algorithms for phase retrieval} \label{WF_convergence}

In the following, we analyze the convergence of gradient descent algorithms for optimization problems using the loss functions discussed in \Cref{chapter: loss_poisson} and \Cref{chapter: losses_variance_stabilization}.

Firstly, we state a convergence guarantee for a gradient descent algorithm according to the Poisson log-likelihood loss.

\begin{theorem} \label{theorem_convergence_poisson}
Let $\varepsilon > 0$ and $\mathcal{L}_{P,\varepsilon}(z) = \sum_{i=1}^{m} \left\vert\left\langle a_{i},z\right\rangle \right\vert^{2} - y_{i} \log\left(\left\vert\left\langle 
a_{i},z\right\rangle \right\vert^{2} + \varepsilon\right)$. Let $M$ be the matrix with rows $\sqrt{1+\tfrac{y_{i}}{8\varepsilon}} \cdot a_{i}^{*}, \  i = 1,\ldots, m$,
and choose  $\mu \leq  \left\| M \right\|^{-2} $. Then, the sequence $(z_{k})_{k \geq 0}\subset\Complex^n$ defined by
\begin{align*} 
z_{k+1} = z_{k} - \mu \cdot \sum_{i=1}^{m} \left(1 - \frac{y_{i}}{\left\vert\left\langle a_{i},z_{k}\right\rangle \right\vert^{2} + \varepsilon}\right) \left\langle a_{i},z_{k}\right\rangle a_{i}
\end{align*}
converges to a stationary point of $\mathcal{L}_{P,\varepsilon}$.
\end{theorem}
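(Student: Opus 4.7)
The plan is to verify the hypotheses of \Cref{proposition_stepsize_convergence} for $\mathcal{L}_{P,\varepsilon}$ on the affine subspace containing the iterates. There are three things to check: (i) a uniform upper bound on the Wirtinger Hessian with constant $L=\|M\|^2$; (ii) a global lower bound on $\mathcal{L}_{P,\varepsilon}$; and (iii) compactness of sublevel sets restricted to the relevant affine subspace, so that the general convergence statement applies.

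For (i), I would write $\mathcal{L}_{P,\varepsilon}(z)=\sum_{i=1}^{m}(\ell_i\circ\varphi_i)(z)$ with $\varphi_i(z)=|\langle a_i,z\rangle|^2$ and $\ell_i(t)=t-y_i\log(t+\varepsilon)$, so that $\ell_i'(t)=1-\frac{y_i}{t+\varepsilon}$ and $\ell_i''(t)=\frac{y_i}{(t+\varepsilon)^2}\geq 0$. Redoing the calculation inside \Cref{lemma_bound_hessian} termwise but not releasing the inner product $|\langle a_i,v\rangle|^2$ to $\|a_i\|_2^2\|v\|_2^2$ yet, I get the sharper summed bound
\begin{equation*}
\begin{pmatrix} v\\\overline{v}\end{pmatrix}^{*}\nabla^{2}\mathcal{L}_{P,\varepsilon}(z)\begin{pmatrix} v\\\overline{v}\end{pmatrix}\leq 2\sum_{i=1}^{m}\bigl(2\ell_i''(t_i)t_i+\ell_i'(t_i)\bigr)|\langle a_i,v\rangle|^2,
\end{equation*}
where $t_i=|\langle a_i,z\rangle|^2$. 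The key scalar inequality is then
\begin{equation*}
2\ell_i''(t)t+\ell_i'(t)=1+\frac{y_i(t-\varepsilon)}{(t+\varepsilon)^2}\leq 1+\frac{y_i}{8\varepsilon}, \qquad t\geq 0,
\end{equation*}
obtained by maximizing $(t-\varepsilon)/(t+\varepsilon)^2$, whose critical point is $t=3\varepsilon$ with value $1/(8\varepsilon)$. Substituting gives $\sum_i 2(1+\tfrac{y_i}{8\varepsilon})|\langle a_i,v\rangle|^2=2\|Mv\|_2^2\leq 2\|M\|^2\|v\|_2^2=\|M\|^2\|(v,\bar v)\|_2^2$. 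So $L=\|M\|^2$ works and the admissible step size is $\mu\leq\|M\|^{-2}$.

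For (ii), each term $t-y_i\log(t+\varepsilon)$ is bounded below on $[0,\infty)$ (it tends to $+\infty$ both as $t\to\infty$ and is finite at $t=0$), so $\mathcal{L}_{P,\varepsilon}$ is bounded below on $\Complex^n$. For (iii), observe that $\nabla_z\mathcal{L}_{P,\varepsilon}(z)\in\text{range}(A^*)$, hence the whole iterate sequence lies in $z_0+\text{range}(A^*)$. The functions $\ell_i$ are continuous, bounded below, and satisfy $\ell_i(t)\to\infty$ as $t\to\infty$, so \Cref{remark_compact_levelsets} applied to this affine subspace yields that $\mathcal{L}_{P,\varepsilon}$ has compact sublevel sets there.

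The three hypotheses of \Cref{proposition_stepsize_convergence} are then satisfied (its statement only requires compact sublevel sets of $f$, but inspection of its proof as given in \cite{filbir2023image} only uses compactness along the iterate sequence, which lies in the affine subspace), so $(z_k)$ converges to a stationary point of $\mathcal{L}_{P,\varepsilon}$. The main obstacle I expect is the scalar optimization that produces the tight constant $1/(8\varepsilon)$ — everything else is a direct specialization of \Cref{lemma_bound_hessian} and \Cref{compact_levelsets}; slightly more care is needed to record that the iterates remain in the affine subspace so that compact sublevel sets in that restricted sense suffice.
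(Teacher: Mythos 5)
Your proposal is correct and follows essentially the same route as the paper: the same decomposition $\ell_i(t)=t-y_i\log(t+\varepsilon)$, the same scalar bound $1+\tfrac{y_i(t-\varepsilon)}{(t+\varepsilon)^2}\leq 1+\tfrac{y_i}{8\varepsilon}$ (maximized at $t=3\varepsilon$), the same identification $2\|Mv\|_2^2=\|M\|^2\|(v,\bar v)\|_2^2$, and the same appeal to \Cref{compact_levelsets} and \Cref{remark_compact_levelsets} on $z_0+\text{range}(A^*)$. Your remark about keeping the termwise bound $|\langle a_i,v\rangle|^2$ before passing to the operator norm is exactly how the paper uses the intermediate estimate from the proof of \Cref{lemma_bound_hessian}.
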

\begin{proof}[Proof.]
We shall apply \Cref{proposition_stepsize_convergence}. Since $\mathcal{L}_{P,\varepsilon}(z)=\sum_{i=1}^m\ell_i\circ\varphi(z)$ with 
$\ell_{i}: [0,\infty) \rightarrow \R, \ t \mapsto t - y_{i} \log(t + \varepsilon)$ and $\varphi(z)=\vert\langle a_i,z\rangle\vert^2$, we compute
\begin{align*}
    2 \cdot \ell_i''\left(t\right) \cdot t + \ell_i'\left(t\right) &= 2 \cdot \frac{y_{i}}{(t + \varepsilon)^{2}} \cdot t + 1 - \frac{y_{i}}{t + \varepsilon}\\[3pt]
    &= 1 + \frac{y_{i}(t - \varepsilon)}{(t + \varepsilon)^{2}} \, \leq \, 1 + \frac{y_i}{8\varepsilon},
\end{align*}
where we used that $\tfrac{s-\varepsilon}{(s+\varepsilon)^{2}}\leq \tfrac{1}{8\varepsilon}$ for every $s\in\mathbb{R}$ and any fixed $\varepsilon>0$. With \Cref{lemma_bound_hessian} and this bound, we obtain 
\begin{align*}
\begin{pmatrix} 
v\\ \overline{v}
\end{pmatrix}^{*} 
\nabla^{2}\mathcal{L}_{P,\varepsilon}(z) 
\begin{pmatrix} 
v\\ \overline{v}
\end{pmatrix} 
&\leq  2\cdot \sum_{i=1}^{m} \left\vert\left\langle \sqrt{1+\tfrac{y_{i}}{8 \varepsilon}} \cdot a_{i}, v\right\rangle\right\vert^{2}\\
&\leq 2\left\|M\right\|^{2} \left\|v\right\|_{2}^{2} = \left\|M\right\|^{2} 
   \left\|  \begin{pmatrix}
   v\vspace{2mm}\\ \overline{v}
   \end{pmatrix}  \right\|_{2}^{2}
\end{align*}
for all $z,v \in \Complex^{n}$. 
It remains to show that $\mathcal{L}_{P,\varepsilon}$ has compact sublevel sets on the subspace $z_{0} + \text{range}(A^{*})$, which contains all possibly attainable iterates of the algorithm. This follows from \Cref{compact_levelsets} and \Cref{remark_compact_levelsets}. Clearly, all functions $\ell_{i}$ are continuous and bounded from below. Further, 
we can find $k_1 > 0$ and $k_2 \in \R$ with $\ell_i(t) \geq k_1 \cdot t + k_2$ for all $t \in [0,\infty)$, hence $\ell_i(t) \rightarrow \infty$ for $t \rightarrow \infty$.
Then, using \Cref{proposition_stepsize_convergence}, we conclude that the considered gradient descent algorithm converges to a stationary point 
of the loss $\mathcal{L}_{P,\varepsilon}$.
\end{proof}

Our main conclusion of \Cref{theorem_convergence_poisson} is that the step size which guarantees convergence must be of the order of the regularization parameter $\varepsilon$. If we are interested in affecting the problem with only a small regularization parameter, this involves choosing a small step size, but using a constant small step size in all iterations results in slow convergence.

In \cite{chen2017solving}, a convergence analysis for a similar algorithm is presented. In contrast to our theory on convergence to stationary points, the authors of \cite{chen2017solving} state a guarantee for convergence to the ground-truth solution. However, while this guarantee only holds in case of a good initialization and is only applicable for measurement vectors $a_i$ being Gaussian random vectors, our result holds independently of the quality of the initialization and for arbitrary measurement systems and, hence, is more relevant for applications.

We proceed with a convergence analysis for the gradient descent algorithms based on the class of loss functions considered in \Cref{chapter: losses_variance_stabilization}.

\begin{theorem} \label{theorem_convergence_optimized_var_stab}
Let 
\begin{align} \label{loss_averaging_transform}
\mathcal{L}_{avg}(z) = \sum_{i=1}^{m}\frac{1}{2}\left( \sqrt{ \ \left\vert\left\langle a_{i},z\right\rangle \right\vert^{2} + c_{1}\ }  + \sqrt{\ \left\vert\left\langle a_{i},z\right\rangle \right\vert^{2} + c_{2} \ } - C \right)^{2},
\end{align}
with  constants $c_2 \geq c_1 > 0, \ C \geq 0$. For arbitrary $z_0\in\mathbb{C}^n$ define a sequence $(z_k)_{k\geq 1}$ by 
\begin{align*} 
z_{k+1} = z_{k} - \mu \cdot \frac{1}{2}\sum_{i=1}^{m} &\left( \sqrt{ \ \left\vert\left\langle a_{i},z_k\right\rangle \right\vert^{2} + c_{1}\ }  + \sqrt{ \ \left\vert\left\langle a_{i},z_k\right\rangle \right\vert^{2} + c_{2} } - C \right)\\[2pt]
& \cdot \left( \frac{1}{ \sqrt{\left\vert\left\langle a_{i},z_k\right\rangle \right\vert^{2} + c_{1}}}  + \frac{1}{\sqrt{\left\vert\left\langle a_{i},z_k\right\rangle \right\vert^{2} + c_{2} \ }}  \right) \left\langle a_{i},z_k\right\rangle a_{i}.
\end{align*}
Then,  $(z_{k})_{k \geq 0}$ converges to a stationary point of the loss function $\mathcal{L}_{avg}$ 
provided $\mu \leq 2 \left(\bigl(3 + \sqrt{c_2/ c_1}\bigr) \left\|A\right\|^{2} \right)^{-1}$, where $A$ denotes the matrix with rows $a_{i}^{*}, \  i = 1,\ldots, m$.
\end{theorem}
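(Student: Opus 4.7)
\medskip

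\noindent\textbf{Proof plan.}
The plan is to verify the two hypotheses of \Cref{proposition_stepsize_convergence}: a uniform upper bound on the (complex) Hessian of $\mathcal{L}_{avg}$ giving the Lipschitz constant $L$, and compactness of the sublevel sets on the affine subspace $z_0+\operatorname{range}(A^{*})$ that contains all iterates. The compactness is immediate from \Cref{compact_levelsets} and \Cref{remark_compact_levelsets}: writing $\mathcal{L}_{avg}=\sum_{i=1}^{m}\ell_i\circ\varphi_i$ with $\varphi_i(z)=|\langle a_i,z\rangle|^2$ and $\ell_i(t)=\tfrac{1}{2}\bigl(\sqrt{t+c_1}+\sqrt{t+c_2}-C\bigr)^2$, each $\ell_i$ is continuous, non-negative, and behaves like $2t$ as $t\to\infty$.

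The main work is the Hessian bound. By \Cref{lemma_bound_hessian} applied termwise, it suffices to find a constant $K$ with $2t\,\ell_i''(t)+\ell_i'(t)\le K$ for all $t\ge 0$; then
\begin{equation*}
\begin{pmatrix}v\\ \bar v\end{pmatrix}^{\!*}\!\nabla^{2}\mathcal{L}_{avg}(z)\begin{pmatrix}v\\ \bar v\end{pmatrix}
\le K\sum_{i=1}^{m}|\langle a_i,v\rangle|^{2}\le K\|A\|^{2}\|v\|_{2}^{2},
\end{equation*}
so the hypothesis of \Cref{proposition_stepsize_convergence} holds with $L=K\|A\|^{2}$, producing the step-size condition stated in the theorem. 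Writing $g(t)=\sqrt{t+c_1}+\sqrt{t+c_2}$, so that $\ell_i=\tfrac12(g-C)^2$, I compute $\ell_i'=(g-C)g'$ and $\ell_i''=(g')^{2}+(g-C)g''$, which gives
\begin{equation*}
2t\,\ell_i''(t)+\ell_i'(t)=2t\,g'(t)^{2}+(g(t)-C)\bigl(2t\,g''(t)+g'(t)\bigr).
\end{equation*}
A direct calculation shows $2t\,g''(t)+g'(t)=\tfrac{c_1}{2(t+c_1)^{3/2}}+\tfrac{c_2}{2(t+c_2)^{3/2}}\ge 0$, so because $C\ge 0$ I can drop $-C$ to obtain the upper bound $2t\,g'(t)^{2}+g(t)\bigl(2t\,g''(t)+g'(t)\bigr)$.

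The key step is a clean simplification of this quantity. Substituting $u=\sqrt{t+c_1}$, $v=\sqrt{t+c_2}$ (so $c_1=u^{2}-t$, $c_2=v^{2}-t$) and expanding everything, the $t$-independent terms collect to $1+\tfrac{v}{2u}+\tfrac{u}{2v}$, while the remaining $t$-dependent terms sum to
\begin{equation*}
t\left[\frac{1}{uv}-\frac{v}{2u^{3}}-\frac{u}{2v^{3}}\right]=-\frac{t\,(u^{2}-v^{2})^{2}}{2u^{3}v^{3}}\le 0.
\end{equation*}
This is the crux: the explicit $t$-contributions cancel into a non-positive term, so $2t\,\ell_i''(t)+\ell_i'(t)\le 1+\tfrac{v}{2u}+\tfrac{u}{2v}$. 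Since $c_1\le c_2$, the ratio $v/u=\sqrt{(t+c_2)/(t+c_1)}$ is decreasing in $t$, so $v/u\le \sqrt{c_2/c_1}$ and $u/v\le 1$; hence $K=\tfrac12(3+\sqrt{c_2/c_1})$ works.

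The main obstacle I expect is precisely the algebraic identity in step three: naive term-by-term bounding (e.g., bounding $2t\,g'(t)^{2}$ and $g(t)(2tg''(t)+g'(t))$ separately) gives a constant too large by roughly a factor of $2$. One has to recognize that the $t$-dependent contributions from the two pieces interact and cancel into the manifestly non-positive expression $-t(u^{2}-v^{2})^{2}/(2u^{3}v^{3})$ to obtain the sharp constant $\tfrac12(3+\sqrt{c_2/c_1})$ stated in the theorem. Once this is in hand, \Cref{proposition_stepsize_convergence} finishes the argument.
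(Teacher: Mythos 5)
Your proposal is correct and follows essentially the same route as the paper: both verify the hypotheses of \Cref{proposition_stepsize_convergence} via \Cref{lemma_bound_hessian} and \Cref{compact_levelsets}, drop the $C$-terms using $2tg''(t)+g'(t)\ge 0$, and reduce the sharp constant to the observation that the $t$-dependent contributions satisfy $2t\bigl(g'(t)^2+g(t)g''(t)\bigr)\le 0$ (your $u,v$-cancellation is exactly the paper's inequality $\tfrac{2}{\alpha\beta}\le\tfrac{\alpha}{\beta^3}+\tfrac{\beta}{\alpha^3}$), leaving $g(t)g'(t)\le\tfrac12\bigl(3+\sqrt{c_2/c_1}\bigr)$. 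Only a cosmetic bookkeeping slip: the middle term of your displayed Hessian chain should read $2K\sum_i|\langle a_i,v\rangle|^2$, which combined with $\|(v;\bar v)\|_2^2=2\|v\|_2^2$ gives exactly the $L=K\|A\|^2$ and step size you state.
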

\begin{proof}[Proof.]
We proceed as in the proof of \Cref{theorem_convergence_poisson}. A bound for the Hessian of $\mathcal{L}_{avg}$ is found by using \Cref{lemma_bound_hessian} with $\ell_i: [0,\infty) \rightarrow [0,\infty), \  t \mapsto \frac{1}{2}\left( \sqrt{t + c_{1}}  + \sqrt{t + c_{2} } - C \right)^{2}$. We have 
\begin{align*}
    \ell'(t) &= \frac{1}{2}\left(\sqrt{t + c_1} + \sqrt{t+c_2} -C\right) \cdot \left(\frac{1}{\sqrt{t+c_1}} + \frac{1}{\sqrt{t+c_2}} \right)\\[6pt]
    &= \frac{1}{2}\left[2 + \frac{\sqrt{t + c_1}}{\sqrt{t + c_2}} + \frac{\sqrt{t + c_2}}{\sqrt{t + c_1}} - C\cdot\left(\frac{1}{\sqrt{t+c_1}}+\frac{1}{\sqrt{t+c_2}}\right)\right],
\end{align*}
which leads to 
 \begin{align*}
    \ell''(t) &= \frac{1}{4}\left(\frac{1}{\sqrt{t+c_1}} + \frac{1}{\sqrt{t+c_2}} \right)^2\\[2pt] & \quad - \frac{1}{4} \left(\sqrt{t + c_1} + \sqrt{t+c_2} -C\right) \cdot \left(\frac{1}{\left(t+c_1\right)^{\frac{3}{2}}} +\frac{1}{\left(t+c_2\right)^{\frac{3}{2}}}\right)  \\[6pt]
    &= \frac{1}{4} \Biggl[\frac{2}{\sqrt{t+c_1}\sqrt{t+c_2}} - \frac{\sqrt{t + c_1}}{\left(t+c_2\right)^{\frac{3}{2}}} - \frac{\sqrt{t+c_2}}{\left(t+c_1\right)^{\frac{3}{2}}}\\[2pt] 
    &\quad+ C \cdot \left(\frac{1}{\left(t+c_1\right)^{\frac{3}{2}}} +\frac{1}{\left(t+c_2\right)^{\frac{3}{2}}}\right)\Biggr] \\[6pt]
    &\leq  \frac{1}{4}  C \cdot \left(\frac{1}{\left(t+c_1\right)^{\frac{3}{2}}} +\frac{1}{\left(t+c_2\right)^{\frac{3}{2}}}\right).
\end{align*} 
For the bound of the second derivative we have used that $\frac{2}{\alpha\beta} \leq \frac{\alpha}{\beta^3} + \frac{\beta}{\alpha^3}$ for $\alpha,\beta \in \R$.  
Thus, we arrive at  
\begin{align*}
    2\, t\, \ell''\left(t\right) + \ell'\left(t\right) &\leq  \frac{1}{2}  C \cdot \left( \frac{t}{\left(t+c_1\right)^{\frac{3}{2}}} +\frac{t}{\left(t+c_2\right)^{\frac{3}{2}}} \right)\\[6pt] 
& \quad + \frac{1}{2}\left[2 + \frac{\sqrt{t + c_1}}{\sqrt{t + c_2}} + \frac{\sqrt{t + c_2}}{\sqrt{t + c_1}} - C\cdot\left(\frac{1}{\sqrt{t+c_1}}+\frac{1}{\sqrt{t+c_2}}\right)\right]\\[6pt]
    &= \frac{1}{2}\left( - C \cdot  \left(\frac{c_1}{\sqrt{t+c_1}}+\frac{c_2}{\sqrt{t+c_2}}\right)  +  2 + 1 + \sqrt{\frac{c_2}{c_1}} \ \right)\\[6pt]
    &\leq \frac{1}{2}\left(3 +  \sqrt{\frac{c_2}{c_1}} \ \right),
\end{align*}
and, consequently,
\begin{align*}
        \begin{pmatrix} v\\ \overline{v}
\end{pmatrix}^{*} \nabla^{2}\mathcal{L}_{avg}(z) \begin{pmatrix} v\\ \overline{v}
\end{pmatrix} \leq 
\sum_{i=1}^{m} &\Biggl(3 + \sqrt{\frac{c_2}{c_1}} \ \Biggr)\left\vert\left\langle a_{i},v\right\rangle\right\vert^{2}\\[6pt]
\leq 
\frac{1}{2} \Biggl(&3 + \sqrt{\frac{c_2}{c_1}} \ \Biggr) \left\|A\right\|^{2} \left\|  \begin{pmatrix}
v\vspace{2mm}\\
\overline{v}
\end{pmatrix}  \right\|_{2}^{2}
\end{align*}
for all $z,v \in \Complex^{n}$. This suggests choosing $\mu =2 \left(\bigl(3 +  \sqrt{c_2/ c_1}\bigr) \left\|A\right\|^{2}\right)^{-1}$. 

The function $\mathcal{L}_{avg}$ restricted to $z_{0} + \text{range}(A^{*})$ has compact level sets by \Cref{compact_levelsets} and \Cref{remark_compact_levelsets}, as all functions 
$\ell_{i}$ are continuous, bounded from below, and satisfy $\ell_i(t) \rightarrow \infty$ for $t \rightarrow \infty$.
By \Cref{proposition_stepsize_convergence}, the  gradient descent algorithm converges to a stationary point of the loss function 
$\mathcal{L}_{avg}$.
\end{proof}

By this result, we also obtain a rule for the step size when using a gradient descent algorithm for a loss \eqref{loss_sqrt_c} or \eqref{amplitude_loss}.

\begin{corollary}
Convergence to a stationary point of a loss function \eqref{amplitude_loss} for some regularization parameter $\varepsilon > 0$ can be achieved by the corresponding gradient descent algorithm using a step size $\mu \leq \left(2 \left\|A\right\|^{2} \right)^{-1}$.
\end{corollary}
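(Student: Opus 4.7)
The plan is to recognize the regularized amplitude loss \eqref{amplitude_loss} as a special instance of the averaging-transform loss \eqref{loss_averaging_transform} already handled by \Cref{theorem_convergence_optimized_var_stab}. I would set $c_1 = c_2 = \varepsilon$, so that the two square-root terms collapse to $2\sqrt{\left\vert\left\langle a_i,z\right\rangle\right\vert^2 + \varepsilon}$, and let the constant (treated per index) be $C_i = 2\sqrt{y_i} \geq 0$. A direct expansion then gives
\begin{equation*}
\sum_{i=1}^{m} \tfrac{1}{2}\bigl(2\sqrt{\left\vert\left\langle a_i, z\right\rangle\right\vert^2+\varepsilon} - 2\sqrt{y_i}\bigr)^{2} = \sum_{i=1}^{m} 2\bigl(\sqrt{\left\vert\left\langle a_i, z\right\rangle\right\vert^2+\varepsilon} - \sqrt{y_i}\bigr)^{2},
\end{equation*}
which is exactly the amplitude loss, and the associated gradient descent update coincides with the one dictated by \Cref{theorem_convergence_optimized_var_stab}.

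The only subtlety, and the place where one has to look back at the proof of \Cref{theorem_convergence_optimized_var_stab}, is that the theorem is phrased with a single constant $C \geq 0$ whereas my reduction requires a data-dependent $C_i$. Inspecting the computation in that proof, however, the Hessian estimate $2t\,\ell_i''(t) + \ell_i'(t) \leq \tfrac{1}{2}\bigl(3 + \sqrt{c_2/c_1}\bigr)$ is obtained by dropping the non-positive term $-\tfrac{1}{2}C\bigl(c_1/\sqrt{t+c_1} + c_2/\sqrt{t+c_2}\bigr)$, so the bound is valid for any $C_i \geq 0$ and, after summing over $i$, still yields the global Hessian bound $\tfrac{1}{2}\bigl(3+\sqrt{c_2/c_1}\bigr)\|A\|^2$ used to invoke \Cref{proposition_stepsize_convergence}. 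Compactness of the sublevel sets on $z_0 + \mathrm{range}(A^\ast)$ transfers without change via \Cref{compact_levelsets} and \Cref{remark_compact_levelsets}, since each summand $\ell_i$ is continuous, bounded below, and diverges as $t \to \infty$.

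Substituting $c_1 = c_2 = \varepsilon$ gives $\sqrt{c_2/c_1} = 1$, so the step-size criterion of \Cref{theorem_convergence_optimized_var_stab} specializes to $\mu \leq 2\bigl((3+1)\|A\|^2\bigr)^{-1} = (2\|A\|^2)^{-1}$, which is precisely the bound claimed in the corollary. I expect no genuinely new estimate is needed; the main care is the $C \leftrightarrow C_i$ bookkeeping described above, after which the statement follows as a direct specialization of the preceding theorem.
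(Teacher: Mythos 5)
Your proposal is correct and is exactly the route the paper intends: the corollary is stated as a direct specialization of \Cref{theorem_convergence_optimized_var_stab} with $c_1=c_2=\varepsilon$, giving $\sqrt{c_2/c_1}=1$ and hence $\mu\leq(2\|A\|^2)^{-1}$. Your extra observation that the theorem's Hessian bound is uniform over $C\geq 0$ (the $C$-dependent term is dropped as non-positive), so an index-dependent $C_i=2\sqrt{y_i}$ causes no harm, is a valid and worthwhile piece of bookkeeping that the paper leaves implicit.
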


This is the same result as Theorem A.1 in \cite{xu2018accelerated}, which means that we generalized the convergence guarantee of \cite{xu2018accelerated} for the amplitude loss to the class of loss functions \eqref{loss_averaging_transform}.

The important difference of this result to the convergence guarantee for the Poisson log-likelihood loss, as stated in \Cref{theorem_convergence_poisson}, is that 
the bound on the step size necessary for convergence is independent of a potentially small regularization parameter.

\begin{corollary}
A loss function using the Tukey-Freeman transform \eqref{tukey_freeman} for variance stabilization would be of the form
\begin{align*} 
\mathcal{L}(z) = \sum_{i=1}^{m}\frac{1}{2}\left( \sqrt{ \ \left\vert\left\langle a_{i},z\right\rangle \right\vert^{2} + \varepsilon\ }  + \sqrt{ \ \left\vert\left\langle a_{i},z\right\rangle \right\vert^{2} + 1 \ } - \sqrt{y_{i}} - \sqrt{y_{i} + 1} \right)^{2},
\end{align*}
with $\varepsilon > 0$ to avoid the singularity in the derivative. By \Cref{theorem_convergence_optimized_var_stab}, we conclude that a gradient descent algorithm using this loss function requires a step size $\mu \leq 2 \left(\bigl(3 + \sqrt{1/ \varepsilon}\bigr) \left\|A\right\|^{2} \right)^{-1}$ for guaranteed convergence.
\end{corollary}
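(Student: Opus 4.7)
The plan is to recognize this corollary as a direct specialization of \Cref{theorem_convergence_optimized_var_stab}. Matching the Tukey-Freeman loss against the template
\begin{equation*}
\mathcal{L}_{avg}(z) = \sum_{i=1}^{m}\tfrac{1}{2}\Bigl(\sqrt{|\langle a_i,z\rangle|^2 + c_1} + \sqrt{|\langle a_i,z\rangle|^2 + c_2} - C\Bigr)^{2}
\end{equation*}
I would set $c_1 = \varepsilon$ and $c_2 = 1$. Substituting into the step-size rule $\mu \leq 2\bigl((3 + \sqrt{c_2/c_1})\,\|A\|^2\bigr)^{-1}$ immediately produces the bound $\mu \leq 2\bigl((3 + \sqrt{1/\varepsilon})\,\|A\|^2\bigr)^{-1}$ that we want.

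The one subtlety is that \Cref{theorem_convergence_optimized_var_stab} is stated for a single constant $C$, whereas here the shift $C_i = \sqrt{y_i} + \sqrt{y_i+1}$ varies with the index $i$. I would address this by revisiting the per-summand computation inside the proof of \Cref{theorem_convergence_optimized_var_stab}: the bound $2t\,\ell''(t) + \ell'(t) \leq \tfrac{1}{2}(3 + \sqrt{c_2/c_1})$ is obtained after collecting all $C$-dependent contributions into the single term $-\tfrac{C}{2}\bigl(\tfrac{c_1}{\sqrt{t+c_1}} + \tfrac{c_2}{\sqrt{t+c_2}}\bigr)$, which is non-positive for every $C \geq 0$. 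Hence the estimate is valid summand-by-summand for arbitrary $C_i \geq 0$, and combining with \Cref{lemma_bound_hessian} gives exactly the Hessian bound required to invoke \Cref{proposition_stepsize_convergence}.

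The remaining hypothesis, compact sublevel sets on $z_0 + \mathrm{range}(A^*)$, follows from \Cref{compact_levelsets} and \Cref{remark_compact_levelsets}, since each summand is continuous, bounded below by zero, and diverges to $+\infty$ as $|\langle a_i,z\rangle|^2 \to \infty$. I do not anticipate any real obstacle, as the argument is a routine specialization; the only bookkeeping issue is verifying that allowing the shift $C$ to depend on $i$ does not alter the Hessian estimate, which the sign analysis above settles.
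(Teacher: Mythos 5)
Your argument is correct and is essentially the paper's own (the corollary is stated without proof precisely because it is the direct specialization of \Cref{theorem_convergence_optimized_var_stab} with $c_1=\varepsilon$ and $c_2=1$). Your additional check that the per-summand estimate $2t\,\ell''(t)+\ell'(t)\leq \tfrac{1}{2}(3+\sqrt{c_2/c_1})$ survives an index-dependent shift $C_i=\sqrt{y_i}+\sqrt{y_i+1}\geq 0$, because all $C$-dependent contributions collect into a non-positive term, resolves a point the paper silently glosses over, and your sign analysis settles it correctly.
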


This means that for this type of variance-stabilizing transform we face the same problematic as for the Poisson log-likelihood loss.

Also for the loss $\mathcal{L}_0$ finally proposed by us we can use the same fixed step size and can guarantee convergence to a stationary point of the loss function.

\begin{corollary}
Convergence to a stationary point of loss $\mathcal{L}_0$ defined in \eqref{0_adaption} can be achieved by the corresponding gradient descent algorithm with a step size $\mu \leq 2 \left(\bigl(3 + \sqrt{c_2/ c_1}\bigr) \left\|A\right\|^{2} \right)^{-1}.$
\end{corollary}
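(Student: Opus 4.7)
The plan is to apply Proposition 2 once more, in the same spirit as the proofs of Theorem 7 and Theorem 9, by splitting the sum defining $\mathcal{L}_0$ according to whether $y_i > 0$ or $y_i = 0$ and bounding the Hessian contribution from each class. Concretely, I would view $\mathcal{L}_0 = \sum_{i=1}^{m} \ell_i \circ \varphi$ with $\varphi(z) = |\langle a_i, z\rangle|^2$, where $\ell_i(t) = \tfrac{1}{2}(\sqrt{t+c_1} + \sqrt{t+c_2} - C)^2$ when $y_i > 0$ and $\ell_i(t) = t$ when $y_i = 0$, and then invoke Lemma 3 term by term.

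For indices with $y_i > 0$, the bound $2t\ell_i''(t) + \ell_i'(t) \le \tfrac{1}{2}(3 + \sqrt{c_2/c_1})$ has already been derived in the proof of Theorem 9 and can be reused verbatim. For indices with $y_i = 0$, $\ell_i'(t) = 1$ and $\ell_i''(t) = 0$, so $2t\ell_i''(t) + \ell_i'(t) = 1$. Since $c_2 \ge c_1 > 0$ forces $\sqrt{c_2/c_1} \ge 1$ and hence $\tfrac{1}{2}(3 + \sqrt{c_2/c_1}) \ge 2 \ge 1$, the uniform bound $2t\ell_i''(t) + \ell_i'(t) \le \tfrac{1}{2}(3 + \sqrt{c_2/c_1})$ holds across \emph{all} $i$. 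Plugging this into Lemma 3 and summing, using $\sum_{i=1}^m |\langle a_i, v\rangle|^2 = \|Av\|_2^2 \le \|A\|^2 \|v\|_2^2 = \tfrac{1}{2}\|A\|^2 \|(v,\bar v)\|_2^2$, yields exactly the Lipschitz bound
\begin{equation*}
\begin{pmatrix} v \\ \bar v \end{pmatrix}^{*} \nabla^2 \mathcal{L}_0(z) \begin{pmatrix} v \\ \bar v \end{pmatrix} \le \tfrac{1}{2}\bigl(3 + \sqrt{c_2/c_1}\bigr)\|A\|^2 \left\| \begin{pmatrix} v \\ \bar v \end{pmatrix} \right\|_2^2,
\end{equation*}
so $L = \tfrac{1}{2}(3+\sqrt{c_2/c_1})\|A\|^2$ and Proposition 2 permits $\mu \le L^{-1}$, matching the claimed bound.

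It remains to check compactness of sublevel sets on $z_0 + \operatorname{range}(A^*)$: each $\ell_i$ is continuous, nonnegative (hence bounded below), and satisfies $\ell_i(t) \to \infty$ as $t \to \infty$ (quadratic growth in $\sqrt{t}$ in the $y_i > 0$ case, and linear growth in the $y_i = 0$ case). Lemma 5 together with Remark 6 then delivers compact sublevel sets, and Proposition 2 closes the argument.

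No real obstacle is expected: the only mild thing to verify is that the $y_i = 0$ terms do not enlarge the Lipschitz constant, which is immediate from $\sqrt{c_2/c_1} \ge 1$. Everything else is an almost mechanical reuse of the computations already carried out in the proof of Theorem 9.
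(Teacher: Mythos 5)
Your proof is correct and is essentially the argument the paper intends: the corollary is stated without proof precisely because it follows by combining the per-term bound $2t\ell_i''(t)+\ell_i'(t)\le\tfrac{1}{2}\bigl(3+\sqrt{c_2/c_1}\bigr)$ already established in the proof of \Cref{theorem_convergence_optimized_var_stab} for the $y_i>0$ terms with the trivial value $1$ for the $y_i=0$ terms, and your observation that $\sqrt{c_2/c_1}\ge 1$ makes the former bound dominate is exactly the right (and only) point needing verification. The compactness check via \Cref{compact_levelsets} and \Cref{remark_compact_levelsets} is likewise the same as in the paper's preceding proofs.
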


\section{Numerical experiments} \label{numerics}

\subsection{Phase retrieval from low-dose data with Poisson noise}

We corroborate our theoretical consideration with numerical experiments on the reconstruction of an object from simulated low-dose Poissonian phase retrieval measurements. 
Our simulations are based on a test object $x \in \Complex^{n}$ with $n = 256$, and complex Gaussian measurement vectors $a_{i} \in \mathbb{C}^{n}, \ i = 1,\ldots,m$, where $m = 10n$. We normalize the measurements such that $\sum_{i=1}^{m} \left\vert\left\langle a_{i},x \right\rangle\right\vert^{2} = 1$ and understand a noiseless value $\left\vert\left\langle a_{i},x \right\rangle\right\vert^{2}$ as the probability that photons arrive at the $i$-th detector pixel. We choose a dose $d \in \N$ and work with measurements
\begin{equation*}
    y_{i} \sim \text{Poisson}\left(d \cdot \left\vert\left\langle a_{i},x\right\rangle \right\vert^{2}\right), \quad  i = 1,\ldots, m.
\end{equation*}
In the following, we experiment with doses $d \in \left\{500, 1000, 1500, \ldots,  4000\right\}$. The histograms in \Cref{histogram} show exemplary realizations of measurement distributions for $d = 500$ and $d = 4000$. The lowest dose corresponds to a signal-to-noise-ratio
$SNR \defeq \|(d \cdot \vert\left\langle a_{i},x\right\rangle\vert^2 )_{i=1}^m \|_{2} \ \big/ \ \| ( y_{i} - d \cdot \left\vert\left\langle a_{i},x\right\rangle\right\vert^{2} )_{i=1}^m \|_{2}$
of approximately $0.6$, the highest dose corresponds to $SNR \approx 1.7$.

\begin{figure}[!h]
\centering
\begin{minipage}[b]{0.45\textwidth}
\centering
  \includegraphics[width=0.75\textwidth]{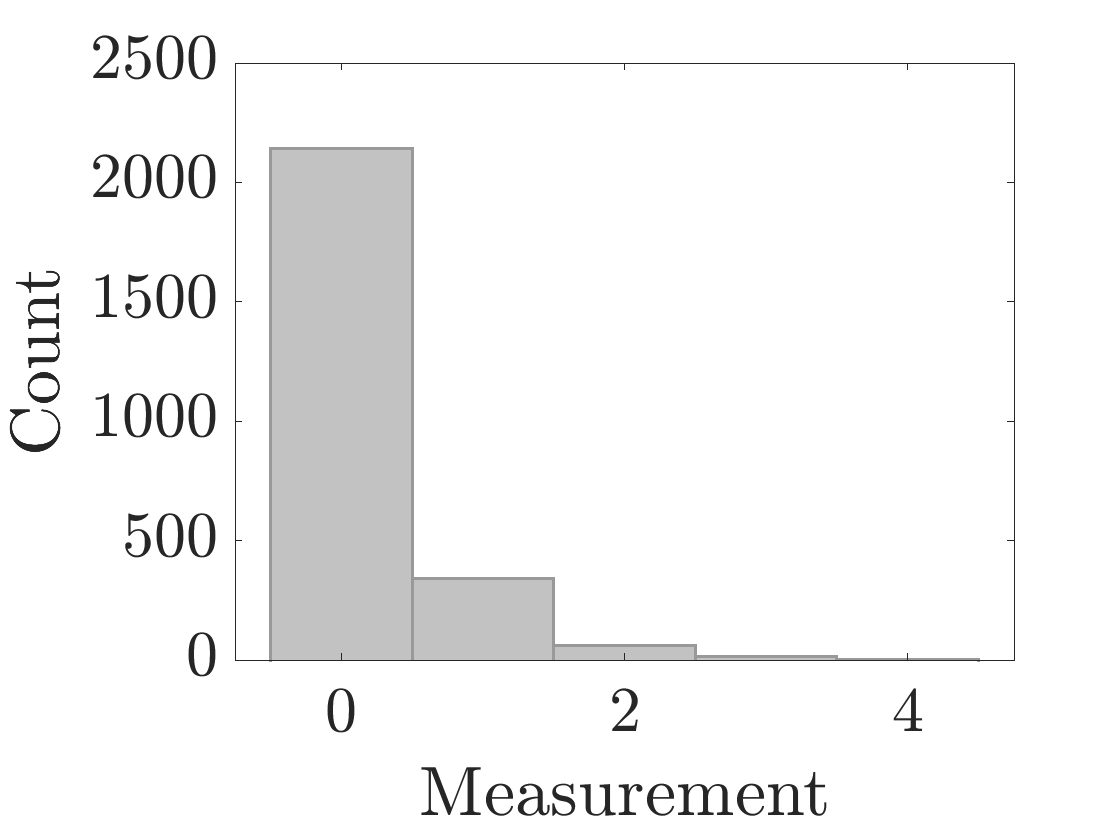}\\
\figurecaptionfont (a) Dose $d = 500$.
\end{minipage}
\hspace{10pt}
\begin{minipage}
[b]{0.45\textwidth}
\centering
  \includegraphics[width=0.75\textwidth]{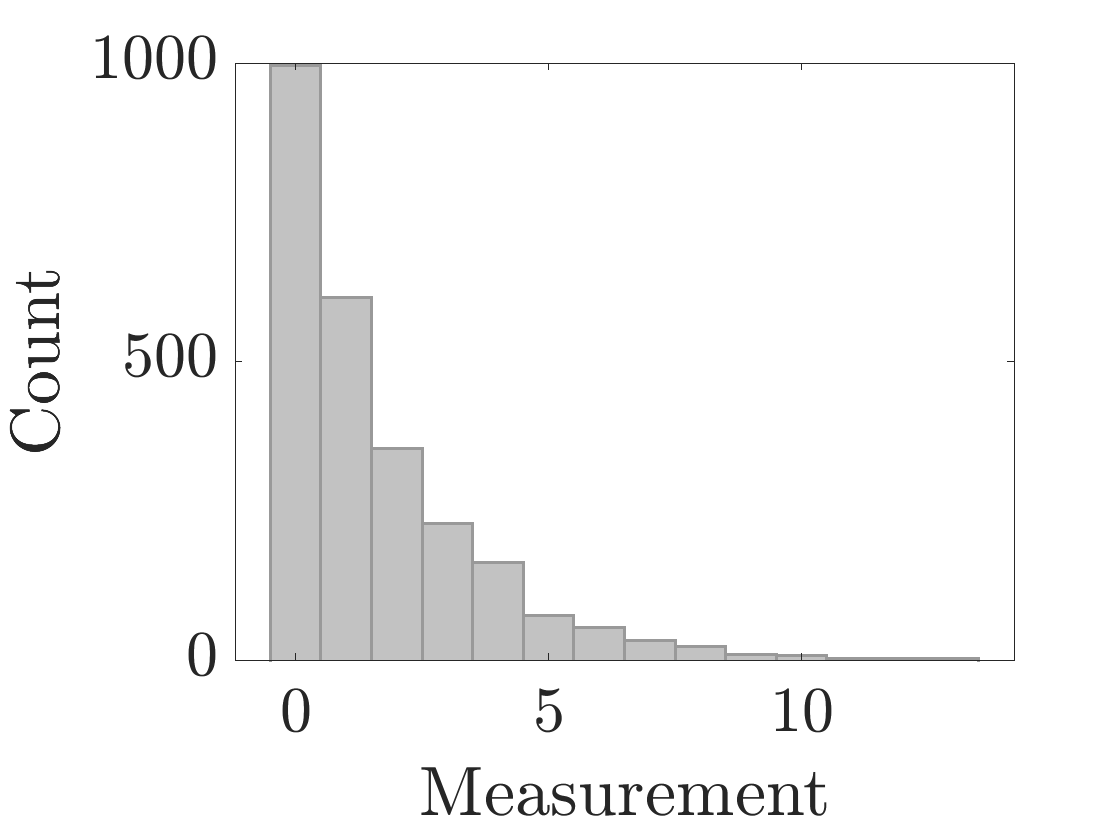}\\
\figurecaptionfont (b) Dose $d = 4000$.
\end{minipage}
\caption{Distribution of measurements for a low-dose and a higher-dose experiment.}
\label{histogram}
\end{figure}

We initialize all considered algorithms using the power method proposed in \cite{candes2015phase}. For each dose we repeat the experiments twenty times and show averages of the reconstruction results.

\subsubsection{Gradient descent with Poisson log-likelihood}

First, we compare the performance of the gradient descent algorithm using the Poisson log-likelihood loss \eqref{regularized_Poisson_loss} for different regularization parameters $\varepsilon$. The respective algorithm is named `Poisson flow'. 

\begin{figure}[!ht]
\centering
\includegraphics[width=0.85\textwidth]{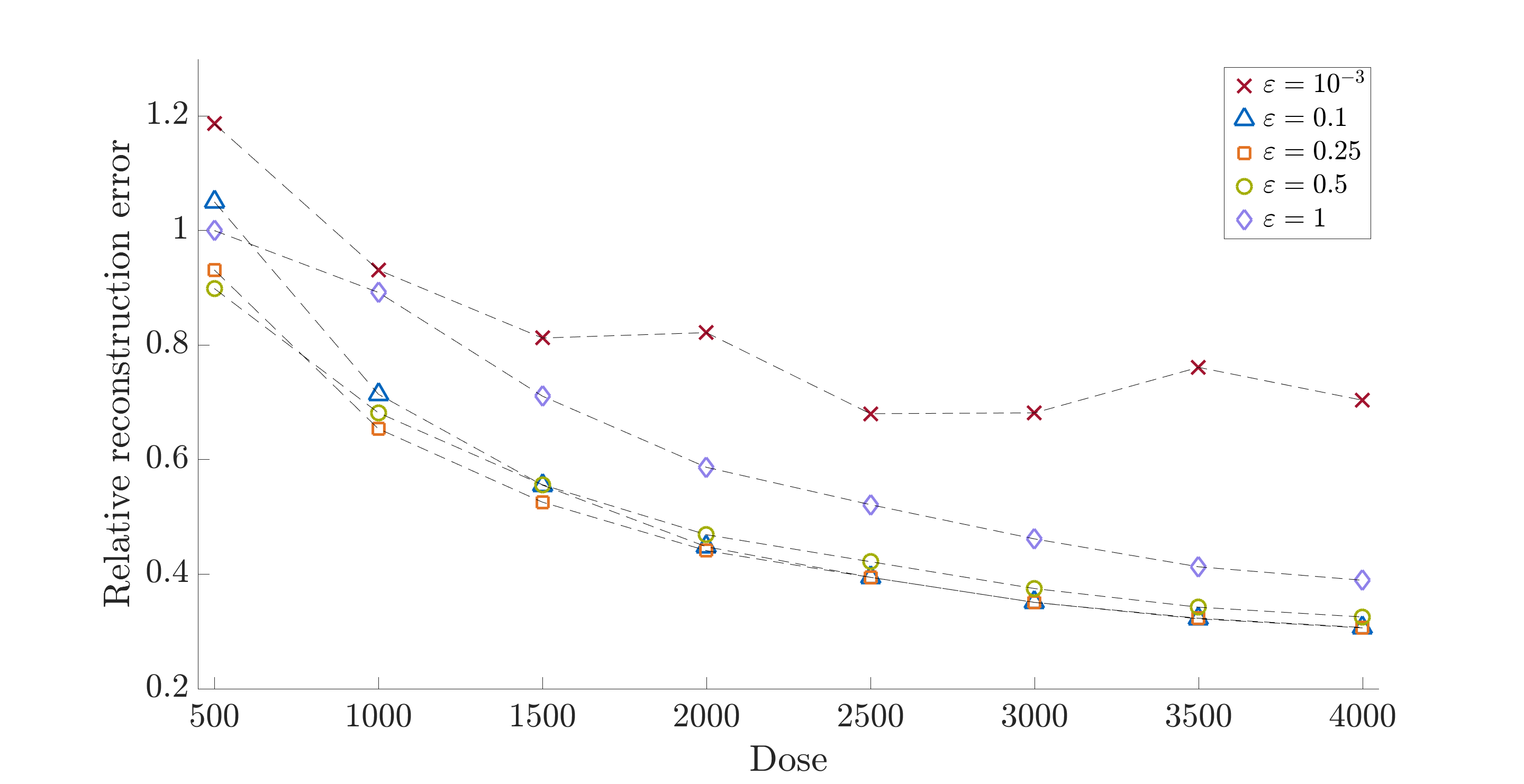}
\caption{Performance of the Poisson flow with regularization parameter $\varepsilon \in \left\{10^{-3}, 0.1, 0.25, 0.5,1 \right\} $.}
\label{fig: rel_err_poisson}
\end{figure}

\Cref{fig: rel_err_poisson} depicts the performance of the discussed algorithms for this experiment in terms of the relative reconstruction error $\min_{\theta \in \left[0,2 \pi\right]} \|x - e^{i\theta} \tilde{x}\|_{2} \ \big/ \ \| x \|_{2}$, with ground-truth $x$ and approximate reconstruction $\tilde{x}$. This experiment shows that the performance of the algorithm clearly depends on the choice of the regularization parameter $\varepsilon$. While in a higher-dose experiment the algorithm is not very sensible to the parameter selection, a low-dose experiment requires knowledge about a good choice for $\varepsilon$.

We would like to comment that, alternatively to \eqref{regularized_Poisson_loss}, we could also work with the loss 
\begin{equation*} 
\mathcal{L}(z) = \sum_{i=1}^{m} \left\vert\left\langle a_{i},z\right\rangle \right\vert^{2} - (y_{i} + \varepsilon) \log\left(\left\vert\left\langle a_{i},z\right\rangle \right\vert^{2} + \varepsilon\right).
\end{equation*}
The corresponding method can be understood as unbiased since this loss is minimized by $z$ satisfying $\left\vert\left\langle a_{i},z\right\rangle \right\vert^{2} = y_{i}, \ i = 1,\ldots, m$. Hence, this method would be less sensible to the selection of $\varepsilon$. However, we found in our numerical analysis that the corresponding algorithm performs worse than the algorithm using loss \eqref{regularized_Poisson_loss} with a good parameter $\varepsilon$. Moreover, the problematic that the recommended step size is parameter dependent remains.

\subsubsection{Gradient descent with Gaussian log-likelihood after variance stabilization}

Further, we compare the Poisson flow with gradient descent algorithms using Gaussian log-likelihood losses, with and without variance stabilization. We denote with `Wirtinger flow', as common in the literature, the algorithm using the loss \eqref{loss_gauss} with constant variance $\sigma_{i}^2 = 1/4$. `Amplitude flow' means the gradient descent algorithm for loss \eqref{amplitude_loss}. The algorithm we label here as `Flow with optimized variance stabilization' uses the loss function \eqref{0_adaption} with $c_1 = 0.12, \ c_2 = 0.27$ and $C = \sqrt{y_{i} + c_1} + \sqrt{y_{i}+c_2}$ and algorithm `Flow with optimized variance stabilization without adaption for zeros' uses loss \eqref{loss_averaging_transform} with the same parameters. 

\begin{figure}[!h]
\centering
\includegraphics[width=0.85\textwidth]{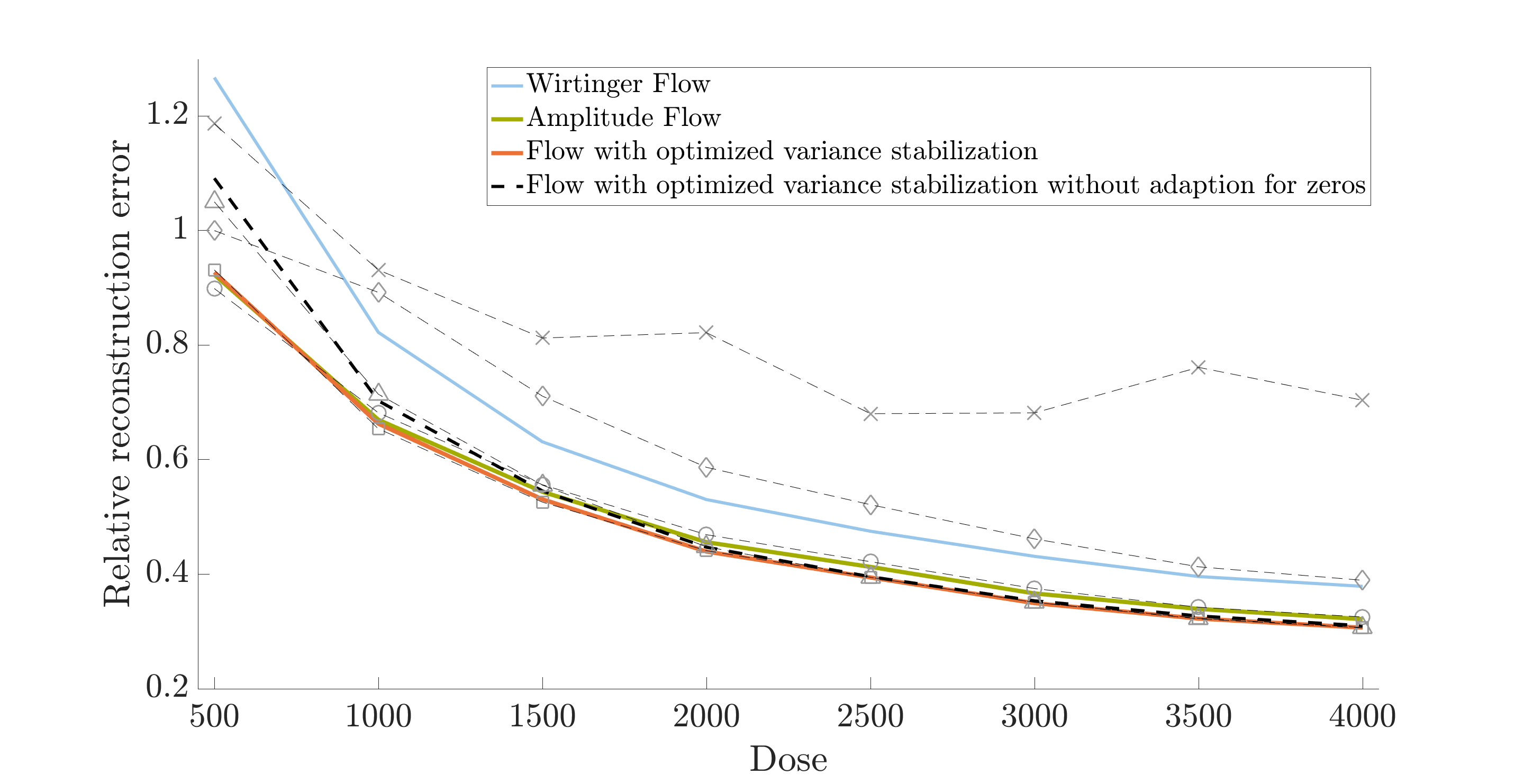}
\caption{Performance of the gradient descent algorithms for the different discussed loss functions. The gray lines correspond to the Poisson flow as in \Cref{fig: rel_err_poisson}.}
\label{fig: rel_err_all}
\end{figure}

The plot of the relative reconstruction errors for different doses in \Cref{fig: rel_err_all} indicates that the algorithm using the suggested loss function with the optimized variance-stabilizing transform including the Poisson log-likelihood adaption to zero measurements performs comparably to the Poisson flow for a close to optimal choice of regularization parameter. If we are aware of an optimal parameter selection, it is reasonable to apply the Poisson flow. However, if no decision rule is available, it is advisable to use a form of regularized amplitude loss function with adaption. 

We note that the adaption \eqref{0_adaption} of the loss for zero measurements is essential for a low-dose experiment dominated by zero measurements. In the amplitude flow using the amplitude loss function \eqref{amplitude_loss} we did not include such an adaption. In this special case, this adaption is implicitly contained in the method as the amplitude loss for zero measurements corresponds to the exact Poisson log-likelihood loss for zero measurements.

Aside from that, we recognize that for a very low-dose experiment the amplitude flow works just as well as the flow using the variance stabilization improved for low-count experiments. This can be explained by the fact that for a distribution parameter close to $0$ the square-root transform performs best in terms of variance stabilization, see \Cref{fig:variance_stabilizing_transforms}.

Interpreting the reconstruction results, we have to note that we are in the low-dose regime and cannot expect very small reconstruction errors without much redundancy in the data, i.e., a large amount of measurements. Furthermore, we have to mind that for drawing more measurements, the dose per measurement needs to be reduced accordingly in a low-dose experiment because more measurements cause more damage.

\subsection{Discussion of the step size selection}

For the Wirtinger flow algorithm using the Gaussian log-likelihood loss we do not find a constant step size that guarantees a decrease of the loss function in each iteration as for the other loss functions discussed in this paper.
We can choose an iteratively adapted step size, based on an analysis similar to the proof ideas used over all this paper, where it is not possible to get rid of the dependence on the iterates. However, this implementation is computationally expensive and makes the choice of this loss function less attractive.
For the experiments, we used the step size proposed in \cite{chen2023generalized} as this is computationally more efficient.

For all other algorithms, we use the step sizes proposed by the respective theoretical results.

\cite{li2022poisson} suggest to use a step size based on an approximation of the Hessian using observed Fisher information for the Poisson log-likelihood loss. This step size is claimed to accelerate the convergence as it is larger than the step size involving the inverse of the factor $1+\frac{y_{i}}{8 \varepsilon}$. However, in comparison to the step size derived in \Cref{theorem_convergence_poisson}, the Fisher information based step size solves a line search in each iteration, which results in computational expense. 

Here, we face a trade-off between having a constant step size, independent of the approximate obtained in each iteration, that guarantees convergence or having a step size rule that can be made independent of $\varepsilon$ by performing a step size optimization in each iteration. Due to the computational advantage, our interest lies rather in the constant step size.

We found in our numerical experiments that for larger regularization parameters $\varepsilon$ the step size proposed by \cite{li2022poisson}, as contrasted with our step size rule, does not guarantee descent of the loss in each iteration. While for $\varepsilon \rightarrow 0$ our step size rule becomes impractical, the iteration dependent step size becomes more useful as it does not decrease on the order of $\varepsilon$. On the other hand, our experiments showed that it is not reasonable to work with an extremely small regularization parameter $\varepsilon$. 

We conclude that using the constant step size in combination with $\varepsilon$ large enough seems to be reasonable. Other than that, we would like to emphasize again that this trade-off problematic can be avoided when using the Gaussian log-likelihood with a good variance stabilization as suggested here.

\section{Conclusion} \label{conclusion}

In this paper, we studied gradient descent algorithms for suitable regularizations and approximations of the Poisson log-likelihood problem. Motivated by the nature of low-dose imaging experiments, we designed a method that allows for improved treatment of zero measurements and other small counts. For all discussed algorithms, we provided a convergence analysis including a step size rule.

In terms of applying such methods to real-world data, a next step is to incorporate a suitable regularization term in the optimization problem, for example of the type discussed in \cite{le2007variational} or \cite{loock2014phase}. The optimal choice for the regularization term depends highly on the type of object under consideration. It is future work to find a reasonable problem formulation for objects such as, for example, viruses that shall be imaged using low-dose illumination. A possible follow-up work on this paper would be to extend the convergence analysis to algorithms involving such regularization attempts.

\vspace{6mm}

\section*{Declarations}

\bmhead{Funding}
The authors acknowledge support by the Helmholtz Association under the contracts No.~ZT-I-0025 (Ptychography 4.0), No.~ZT-I-PF-4-018 (AsoftXm), No.~ZT-I-PF-5-28 (EDARTI), No.~ZT-I-PF-4-024 (BRLEMMM). Patricia Römer was further financially supported by the German Research Foundation (DFG) grants KR 4512/1-1 and KR 4512/2-2.

\bibliography{sn-bibliography.bbl} 


\begin{thebibliography}{46}
\ifx \bisbn   \undefined \def \bisbn  #1{ISBN #1}\fi
\ifx \binits  \undefined \def \binits#1{#1}\fi
\ifx \bauthor  \undefined \def \bauthor#1{#1}\fi
\ifx \batitle  \undefined \def \batitle#1{#1}\fi
\ifx \bjtitle  \undefined \def \bjtitle#1{#1}\fi
\ifx \bvolume  \undefined \def \bvolume#1{\textbf{#1}}\fi
\ifx \byear  \undefined \def \byear#1{#1}\fi
\ifx \bissue  \undefined \def \bissue#1{#1}\fi
\ifx \bfpage  \undefined \def \bfpage#1{#1}\fi
\ifx \blpage  \undefined \def \blpage #1{#1}\fi
\ifx \burl  \undefined \def \burl#1{\textsf{#1}}\fi
\ifx \doiurl  \undefined \def \doiurl#1{\url{https://doi.org/#1}}\fi
\ifx \betal  \undefined \def \betal{\textit{et al.}}\fi
\ifx \binstitute  \undefined \def \binstitute#1{#1}\fi
\ifx \binstitutionaled  \undefined \def \binstitutionaled#1{#1}\fi
\ifx \bctitle  \undefined \def \bctitle#1{#1}\fi
\ifx \beditor  \undefined \def \beditor#1{#1}\fi
\ifx \bpublisher  \undefined \def \bpublisher#1{#1}\fi
\ifx \bbtitle  \undefined \def \bbtitle#1{#1}\fi
\ifx \bedition  \undefined \def \bedition#1{#1}\fi
\ifx \bseriesno  \undefined \def \bseriesno#1{#1}\fi
\ifx \blocation  \undefined \def \blocation#1{#1}\fi
\ifx \bsertitle  \undefined \def \bsertitle#1{#1}\fi
\ifx \bsnm \undefined \def \bsnm#1{#1}\fi
\ifx \bsuffix \undefined \def \bsuffix#1{#1}\fi
\ifx \bparticle \undefined \def \bparticle#1{#1}\fi
\ifx \barticle \undefined \def \barticle#1{#1}\fi
\bibcommenthead
\ifx \bconfdate \undefined \def \bconfdate #1{#1}\fi
\ifx \botherref \undefined \def \botherref #1{#1}\fi
\ifx \url \undefined \def \url#1{\textsf{#1}}\fi
\ifx \bchapter \undefined \def \bchapter#1{#1}\fi
\ifx \bbook \undefined \def \bbook#1{#1}\fi
\ifx \bcomment \undefined \def \bcomment#1{#1}\fi
\ifx \oauthor \undefined \def \oauthor#1{#1}\fi
\ifx \citeauthoryear \undefined \def \citeauthoryear#1{#1}\fi
\ifx \endbibitem  \undefined \def \endbibitem {}\fi
\ifx \bconflocation  \undefined \def \bconflocation#1{#1}\fi
\ifx \arxivurl  \undefined \def \arxivurl#1{\textsf{#1}}\fi
\csname PreBibitemsHook\endcsname

\bibitem{beinert2015ambiguities}
\begin{barticle}
\bauthor{\bsnm{Beinert}, \binits{R.}},
\bauthor{\bsnm{Plonka}, \binits{G.}}:
\batitle{{Ambiguities in one-dimensional discrete phase retrieval from Fourier
  magnitudes}}.
\bjtitle{Journal of Fourier Analysis and Applications}
\bvolume{21},
\bfpage{1169}--\blpage{1198}
(\byear{2015})
\end{barticle}
\endbibitem

\bibitem{rodenburg2019ptychography}
\begin{botherref}
\oauthor{\bsnm{Rodenburg}, \binits{J.}},
\oauthor{\bsnm{Maiden}, \binits{A.}}:
{Ptychography}.
Springer Handbook of Microscopy,
819--904
(2019)
\end{botherref}
\endbibitem

\bibitem{pfeiffer2018x}
\begin{barticle}
\bauthor{\bsnm{Pfeiffer}, \binits{F.}}:
\batitle{{X-ray ptychography}}.
\bjtitle{Nature Photonics}
\bvolume{12}(\bissue{1}),
\bfpage{9}--\blpage{17}
(\byear{2018})
\end{barticle}
\endbibitem

\bibitem{thibault2009probe}
\begin{barticle}
\bauthor{\bsnm{Thibault}, \binits{P.}},
\bauthor{\bsnm{Dierolf}, \binits{M.}},
\bauthor{\bsnm{Bunk}, \binits{O.}},
\bauthor{\bsnm{Menzel}, \binits{A.}},
\bauthor{\bsnm{Pfeiffer}, \binits{F.}}:
\batitle{{Probe retrieval in ptychographic coherent diffractive imaging}}.
\bjtitle{Ultramicroscopy}
\bvolume{109}(\bissue{4}),
\bfpage{338}--\blpage{343}
(\byear{2009})
\end{barticle}
\endbibitem

\bibitem{fienup1982phase}
\begin{barticle}
\bauthor{\bsnm{Fienup}, \binits{J.R.}}:
\batitle{{Phase retrieval algorithms: a comparison}}.
\bjtitle{Applied optics}
\bvolume{21}(\bissue{15}),
\bfpage{2758}--\blpage{2769}
(\byear{1982})
\end{barticle}
\endbibitem

\bibitem{balan2006signal}
\begin{barticle}
\bauthor{\bsnm{Balan}, \binits{R.}},
\bauthor{\bsnm{Casazza}, \binits{P.}},
\bauthor{\bsnm{Edidin}, \binits{D.}}:
\batitle{{On signal reconstruction without phase}}.
\bjtitle{Applied and Computational Harmonic Analysis}
\bvolume{20}(\bissue{3}),
\bfpage{345}--\blpage{356}
(\byear{2006})
\end{barticle}
\endbibitem

\bibitem{candes2015phase_matrixcompletion}
\begin{barticle}
\bauthor{\bsnm{Candes}, \binits{E.J.}},
\bauthor{\bsnm{Eldar}, \binits{Y.C.}},
\bauthor{\bsnm{Strohmer}, \binits{T.}},
\bauthor{\bsnm{Voroninski}, \binits{V.}}:
\batitle{{Phase retrieval via matrix completion}}.
\bjtitle{SIAM review}
\bvolume{57}(\bissue{2}),
\bfpage{225}--\blpage{251}
(\byear{2015})
\end{barticle}
\endbibitem

\bibitem{sun2018geometric}
\begin{barticle}
\bauthor{\bsnm{Sun}, \binits{J.}},
\bauthor{\bsnm{Qu}, \binits{Q.}},
\bauthor{\bsnm{Wright}, \binits{J.}}:
\batitle{{A geometric analysis of phase retrieval}}.
\bjtitle{Foundations of Computational Mathematics}
\bvolume{18},
\bfpage{1131}--\blpage{1198}
(\byear{2018})
\end{barticle}
\endbibitem

\bibitem{conca2015algebraic}
\begin{barticle}
\bauthor{\bsnm{Conca}, \binits{A.}},
\bauthor{\bsnm{Edidin}, \binits{D.}},
\bauthor{\bsnm{Hering}, \binits{M.}},
\bauthor{\bsnm{Vinzant}, \binits{C.}}:
\batitle{{An algebraic characterization of injectivity in phase retrieval}}.
\bjtitle{Applied and Computational Harmonic Analysis}
\bvolume{38}(\bissue{2}),
\bfpage{346}--\blpage{356}
(\byear{2015})
\end{barticle}
\endbibitem

\bibitem{candes2015phase}
\begin{barticle}
\bauthor{\bsnm{Candes}, \binits{E.J.}},
\bauthor{\bsnm{Li}, \binits{X.}},
\bauthor{\bsnm{Soltanolkotabi}, \binits{M.}}:
\batitle{{Phase retrieval via Wirtinger flow: Theory and algorithms}}.
\bjtitle{IEEE Transactions on Information Theory}
\bvolume{61}(\bissue{4}),
\bfpage{1985}--\blpage{2007}
(\byear{2015})
\end{barticle}
\endbibitem

\bibitem{zhang2017nonconvex}
\begin{barticle}
\bauthor{\bsnm{Zhang}, \binits{H.}},
\bauthor{\bsnm{Liang}, \binits{Y.}},
\bauthor{\bsnm{Chi}, \binits{Y.}}:
\batitle{{A nonconvex approach for phase retrieval: Reshaped wirtinger flow and
  incremental algorithms}}.
\bjtitle{Journal of Machine Learning Research}
\bvolume{18}(\bissue{141}),
\bfpage{1}--\blpage{35}
(\byear{2017})
\end{barticle}
\endbibitem

\bibitem{wang2017solving}
\begin{barticle}
\bauthor{\bsnm{Wang}, \binits{G.}},
\bauthor{\bsnm{Giannakis}, \binits{G.B.}},
\bauthor{\bsnm{Eldar}, \binits{Y.C.}}:
\batitle{{Solving systems of random quadratic equations via truncated amplitude
  flow}}.
\bjtitle{IEEE Transactions on Information Theory}
\bvolume{64}(\bissue{2}),
\bfpage{773}--\blpage{794}
(\byear{2017})
\end{barticle}
\endbibitem

\bibitem{chen2017solving}
\begin{barticle}
\bauthor{\bsnm{Chen}, \binits{Y.}},
\bauthor{\bsnm{Cand{\`e}s}, \binits{E.J.}}:
\batitle{{Solving random quadratic systems of equations is nearly as easy as
  solving linear systems}}.
\bjtitle{Communications on pure and applied mathematics}
\bvolume{70}(\bissue{5}),
\bfpage{822}--\blpage{883}
(\byear{2017})
\end{barticle}
\endbibitem

\bibitem{kolte2016phase}
\begin{botherref}
\oauthor{\bsnm{Kolte}, \binits{R.}},
\oauthor{\bsnm{{\"O}zg{\"u}r}, \binits{A.}}:
{Phase retrieval via incremental truncated Wirtinger flow}.
arXiv preprint arXiv:1606.03196
(2016)
\end{botherref}
\endbibitem

\bibitem{wang2017scalable}
\begin{barticle}
\bauthor{\bsnm{Wang}, \binits{G.}},
\bauthor{\bsnm{Giannakis}, \binits{G.B.}},
\bauthor{\bsnm{Chen}, \binits{J.}}:
\batitle{{Scalable solvers of random quadratic equations via stochastic
  truncated amplitude flow}}.
\bjtitle{IEEE Transactions on Signal Processing}
\bvolume{65}(\bissue{8}),
\bfpage{1961}--\blpage{1974}
(\byear{2017})
\end{barticle}
\endbibitem

\bibitem{gao2020perturbed}
\begin{barticle}
\bauthor{\bsnm{Gao}, \binits{B.}},
\bauthor{\bsnm{Sun}, \binits{X.}},
\bauthor{\bsnm{Wang}, \binits{Y.}},
\bauthor{\bsnm{Xu}, \binits{Z.}}:
\batitle{{Perturbed amplitude flow for phase retrieval}}.
\bjtitle{IEEE Transactions on Signal Processing}
\bvolume{68},
\bfpage{5427}--\blpage{5440}
(\byear{2020})
\end{barticle}
\endbibitem

\bibitem{tan2019phase}
\begin{barticle}
\bauthor{\bsnm{Tan}, \binits{Y.S.}},
\bauthor{\bsnm{Vershynin}, \binits{R.}}:
\batitle{{Phase retrieval via randomized Kaczmarz: theoretical guarantees}}.
\bjtitle{Information and Inference: A Journal of the IMA}
\bvolume{8}(\bissue{1}),
\bfpage{97}--\blpage{123}
(\byear{2019})
\end{barticle}
\endbibitem

\bibitem{huang2022linear}
\begin{barticle}
\bauthor{\bsnm{Huang}, \binits{M.}},
\bauthor{\bsnm{Wang}, \binits{Y.}}:
\batitle{{Linear convergence of randomized Kaczmarz method for solving
  complex-valued phaseless equations}}.
\bjtitle{SIAM Journal on Imaging Sciences}
\bvolume{15}(\bissue{2}),
\bfpage{989}--\blpage{1016}
(\byear{2022})
\end{barticle}
\endbibitem

\bibitem{romer2021randomized}
\begin{bchapter}
\bauthor{\bsnm{R{\"o}mer}, \binits{P.}},
\bauthor{\bsnm{Filbir}, \binits{F.}},
\bauthor{\bsnm{Krahmer}, \binits{F.}}:
\bctitle{{On the randomized Kaczmarz algorithm for phase retrieval}}.
In: \bbtitle{2021 55th Asilomar Conference on Signals, Systems, and Computers},
pp. \bfpage{847}--\blpage{851}
(\byear{2021}).
\bcomment{IEEE}
\end{bchapter}
\endbibitem

\bibitem{xu2018accelerated}
\begin{botherref}
\oauthor{\bsnm{Xu}, \binits{R.}},
\oauthor{\bsnm{Soltanolkotabi}, \binits{M.}},
\oauthor{\bsnm{Haldar}, \binits{J.P.}},
\oauthor{\bsnm{Unglaub}, \binits{W.}},
\oauthor{\bsnm{Zusman}, \binits{J.}},
\oauthor{\bsnm{Levi}, \binits{A.F.}},
\oauthor{\bsnm{Leahy}, \binits{R.M.}}:
{Accelerated wirtinger flow: A fast algorithm for ptychography}.
arXiv preprint arXiv:1806.05546
(2018)
\end{botherref}
\endbibitem

\bibitem{filbir2023image}
\begin{barticle}
\bauthor{\bsnm{Filbir}, \binits{F.}},
\bauthor{\bsnm{Melnyk}, \binits{O.}}:
\batitle{{Image Recovery for Blind Polychromatic Ptychography}}.
\bjtitle{SIAM Journal on Imaging Sciences}
\bvolume{16}(\bissue{3}),
\bfpage{1308}--\blpage{1337}
(\byear{2023})
\end{barticle}
\endbibitem

\bibitem{chang2019advanced}
\begin{barticle}
\bauthor{\bsnm{Chang}, \binits{H.}},
\bauthor{\bsnm{Enfedaque}, \binits{P.}},
\bauthor{\bsnm{Zhang}, \binits{J.}},
\bauthor{\bsnm{Reinhardt}, \binits{J.}},
\bauthor{\bsnm{Enders}, \binits{B.}},
\bauthor{\bsnm{Yu}, \binits{Y.-S.}},
\bauthor{\bsnm{Shapiro}, \binits{D.}},
\bauthor{\bsnm{Schroer}, \binits{C.G.}},
\bauthor{\bsnm{Zeng}, \binits{T.}},
\bauthor{\bsnm{Marchesini}, \binits{S.}}:
\batitle{{Advanced denoising for X-ray ptychography}}.
\bjtitle{Optics express}
\bvolume{27}(\bissue{8}),
\bfpage{10395}--\blpage{10418}
(\byear{2019})
\end{barticle}
\endbibitem

\bibitem{melnyk2024background}
\begin{botherref}
\oauthor{\bsnm{Melnyk}, \binits{O.}},
\oauthor{\bsnm{R{\"o}mer}, \binits{P.}}:
{Background Denoising for Ptychography via Wigner Distribution Deconvolution}.
arXiv preprint arXiv:2402.15353
(2024)
\end{botherref}
\endbibitem

\bibitem{thibault2012maximum}
\begin{barticle}
\bauthor{\bsnm{Thibault}, \binits{P.}},
\bauthor{\bsnm{Guizar-Sicairos}, \binits{M.}}:
\batitle{{Maximum-likelihood refinement for coherent diffractive imaging}}.
\bjtitle{New Journal of Physics}
\bvolume{14}(\bissue{6}),
\bfpage{063004}
(\byear{2012})
\end{barticle}
\endbibitem

\bibitem{yeh2015experimental}
\begin{barticle}
\bauthor{\bsnm{Yeh}, \binits{L.-H.}},
\bauthor{\bsnm{Dong}, \binits{J.}},
\bauthor{\bsnm{Zhong}, \binits{J.}},
\bauthor{\bsnm{Tian}, \binits{L.}},
\bauthor{\bsnm{Chen}, \binits{M.}},
\bauthor{\bsnm{Tang}, \binits{G.}},
\bauthor{\bsnm{Soltanolkotabi}, \binits{M.}},
\bauthor{\bsnm{Waller}, \binits{L.}}:
\batitle{{Experimental robustness of Fourier ptychography phase retrieval
  algorithms}}.
\bjtitle{Optics express}
\bvolume{23}(\bissue{26}),
\bfpage{33214}--\blpage{33240}
(\byear{2015})
\end{barticle}
\endbibitem

\bibitem{hohage2016inverse}
\begin{barticle}
\bauthor{\bsnm{Hohage}, \binits{T.}},
\bauthor{\bsnm{Werner}, \binits{F.}}:
\batitle{{Inverse problems with Poisson data: statistical regularization
  theory, applications and algorithms}}.
\bjtitle{Inverse Problems}
\bvolume{32}(\bissue{9}),
\bfpage{093001}
(\byear{2016})
\end{barticle}
\endbibitem

\bibitem{chang2018total}
\begin{barticle}
\bauthor{\bsnm{Chang}, \binits{H.}},
\bauthor{\bsnm{Lou}, \binits{Y.}},
\bauthor{\bsnm{Duan}, \binits{Y.}},
\bauthor{\bsnm{Marchesini}, \binits{S.}}:
\batitle{{Total variation--based phase retrieval for Poisson noise removal}}.
\bjtitle{SIAM journal on imaging sciences}
\bvolume{11}(\bissue{1}),
\bfpage{24}--\blpage{55}
(\byear{2018})
\end{barticle}
\endbibitem

\bibitem{fatima2022pdmm}
\begin{barticle}
\bauthor{\bsnm{Fatima}, \binits{G.}},
\bauthor{\bsnm{Li}, \binits{Z.}},
\bauthor{\bsnm{Arora}, \binits{A.}},
\bauthor{\bsnm{Babu}, \binits{P.}}:
\batitle{{PDMM: A novel primal-dual Majorization-minimization algorithm for
  poisson phase-retrieval problem}}.
\bjtitle{IEEE Transactions on Signal Processing}
\bvolume{70},
\bfpage{1241}--\blpage{1255}
(\byear{2022})
\end{barticle}
\endbibitem

\bibitem{glaeser1971limitations}
\begin{barticle}
\bauthor{\bsnm{Glaeser}, \binits{R.M.}}:
\batitle{{Limitations to significant information in biological electron
  microscopy as a result of radiation damage}}.
\bjtitle{Journal of ultrastructure research}
\bvolume{36}(\bissue{3-4}),
\bfpage{466}--\blpage{482}
(\byear{1971})
\end{barticle}
\endbibitem

\bibitem{wang2009radiation}
\begin{barticle}
\bauthor{\bsnm{Wang}, \binits{J.}},
\bauthor{\bsnm{Morin}, \binits{C.}},
\bauthor{\bsnm{Li}, \binits{L.}},
\bauthor{\bsnm{Hitchcock}, \binits{A.}},
\bauthor{\bsnm{Scholl}, \binits{A.}},
\bauthor{\bsnm{Doran}, \binits{A.}}:
\batitle{{Radiation damage in soft X-ray microscopy}}.
\bjtitle{Journal of Electron Spectroscopy and Related Phenomena}
\bvolume{170}(\bissue{1-3}),
\bfpage{25}--\blpage{36}
(\byear{2009})
\end{barticle}
\endbibitem

\bibitem{2024lowdosecryo}
\begin{botherref}
\oauthor{\bsnm{Küçükoğlu}, \binits{B.}},
\oauthor{\bsnm{Mohammed}, \binits{I.}},
\oauthor{\bsnm{Guerrero-Ferreira}, \binits{R.C.}},
\oauthor{\bsnm{Ribet}, \binits{S.M.}},
\oauthor{\bsnm{Varnavides}, \binits{G.}},
\oauthor{\bsnm{Leidl}, \binits{M.L.}},
\oauthor{\bsnm{Lau}, \binits{K.}},
\oauthor{\bsnm{Nazarov}, \binits{S.}},
\oauthor{\bsnm{Myasnikov}, \binits{A.}},
\oauthor{\bsnm{Sachse}, \binits{C.}},
\oauthor{\bsnm{Müller-Caspary}, \binits{K.}},
\oauthor{\bsnm{Ophus}, \binits{C.}},
\oauthor{\bsnm{Stahlberg}, \binits{H.}}:
{Low-dose cryo-electron ptychography of proteins at sub-nanometer resolution}.
bioRxiv 2024.02.12.579607
(2024)
\end{botherref}
\endbibitem

\bibitem{pelz2017low}
\begin{barticle}
\bauthor{\bsnm{Pelz}, \binits{P.M.}},
\bauthor{\bsnm{Qiu}, \binits{W.X.}},
\bauthor{\bsnm{B{\"u}cker}, \binits{R.}},
\bauthor{\bsnm{Kassier}, \binits{G.}},
\bauthor{\bsnm{Miller}, \binits{R.D.}}:
\batitle{{Low-dose cryo electron ptychography via non-convex Bayesian
  optimization}}.
\bjtitle{Scientific reports}
\bvolume{7}(\bissue{1}),
\bfpage{9883}
(\byear{2017})
\end{barticle}
\endbibitem

\bibitem{godard2012noise}
\begin{barticle}
\bauthor{\bsnm{Godard}, \binits{P.}},
\bauthor{\bsnm{Allain}, \binits{M.}},
\bauthor{\bsnm{Chamard}, \binits{V.}},
\bauthor{\bsnm{Rodenburg}, \binits{J.}}:
\batitle{{Noise models for low counting rate coherent diffraction imaging}}.
\bjtitle{Optics express}
\bvolume{20}(\bissue{23}),
\bfpage{25914}--\blpage{25934}
(\byear{2012})
\end{barticle}
\endbibitem

\bibitem{li2022poisson}
\begin{barticle}
\bauthor{\bsnm{Li}, \binits{Z.}},
\bauthor{\bsnm{Lange}, \binits{K.}},
\bauthor{\bsnm{Fessler}, \binits{J.A.}}:
\batitle{{Poisson phase retrieval in very low-count regimes}}.
\bjtitle{IEEE transactions on computational imaging}
\bvolume{8},
\bfpage{838}--\blpage{850}
(\byear{2022})
\end{barticle}
\endbibitem

\bibitem{bertero2010discrepancy}
\begin{barticle}
\bauthor{\bsnm{Bertero}, \binits{M.}},
\bauthor{\bsnm{Boccacci}, \binits{P.}},
\bauthor{\bsnm{Talenti}, \binits{G.}},
\bauthor{\bsnm{Zanella}, \binits{R.}},
\bauthor{\bsnm{Zanni}, \binits{L.}}:
\batitle{{A discrepancy principle for Poisson data}}.
\bjtitle{Inverse problems}
\bvolume{26}(\bissue{10}),
\bfpage{105004}
(\byear{2010})
\end{barticle}
\endbibitem

\bibitem{bevilacqua2021nearly}
\begin{botherref}
\oauthor{\bsnm{Bevilacqua}, \binits{F.}},
\oauthor{\bsnm{Lanza}, \binits{A.}},
\oauthor{\bsnm{Pragliola}, \binits{M.}},
\oauthor{\bsnm{Sgallari}, \binits{F.}}:
{Nearly exact discrepancy principle for low-count Poisson image restoration}.
Journal of Imaging
\textbf{8}(1)
(2021)
\end{botherref}
\endbibitem

\bibitem{tippett19352}
\begin{barticle}
\bauthor{\bsnm{Tippett}, \binits{L.}}:
\batitle{{2-statistical methods in textile research. Part 2-Uses of the
  binomial and poisson distributions}}.
\bjtitle{Journal of the Textile Institute Transactions}
\bvolume{26}(\bissue{1}),
\bfpage{13}--\blpage{50}
(\byear{1935})
\end{barticle}
\endbibitem

\bibitem{bartlett1936square}
\begin{barticle}
\bauthor{\bsnm{Bartlett}, \binits{M.S.}}:
\batitle{{The square root transformation in analysis of variance}}.
\bjtitle{Supplement to the Journal of the Royal Statistical Society}
\bvolume{3}(\bissue{1}),
\bfpage{68}--\blpage{78}
(\byear{1936})
\end{barticle}
\endbibitem

\bibitem{curtiss1943transformations}
\begin{barticle}
\bauthor{\bsnm{Curtiss}, \binits{J.H.}}:
\batitle{{On transformations used in the analysis of variance}}.
\bjtitle{The Annals of Mathematical Statistics}
\bvolume{14}(\bissue{2}),
\bfpage{107}--\blpage{122}
(\byear{1943})
\end{barticle}
\endbibitem

\bibitem{anscombe1948transformation}
\begin{barticle}
\bauthor{\bsnm{Anscombe}, \binits{F.J.}}:
\batitle{{The transformation of Poisson, binomial and negative-binomial data}}.
\bjtitle{Biometrika}
\bvolume{35}(\bissue{3/4}),
\bfpage{246}--\blpage{254}
(\byear{1948})
\end{barticle}
\endbibitem

\bibitem{freeman1950transformations}
\begin{botherref}
\oauthor{\bsnm{Freeman}, \binits{M.F.}},
\oauthor{\bsnm{Tukey}, \binits{J.W.}}:
{Transformations related to the angular and the square root}.
The Annals of Mathematical Statistics,
607--611
(1950)
\end{botherref}
\endbibitem

\bibitem{konijnenberg2017study}
\begin{botherref}
\oauthor{\bsnm{Konijnenberg}, \binits{A.}},
\oauthor{\bsnm{Coene}, \binits{W.}},
\oauthor{\bsnm{Urbach}, \binits{H.}}:
{Study of cost functionals for ptychographic phase retrieval to improve the
  robustness against noise, and a proposal for another noise-robust
  ptychographic phase retrieval scheme}.
arXiv preprint arXiv:1703.10892
(2017)
\end{botherref}
\endbibitem

\bibitem{zhang2017fourier}
\begin{barticle}
\bauthor{\bsnm{Zhang}, \binits{Y.}},
\bauthor{\bsnm{Song}, \binits{P.}},
\bauthor{\bsnm{Dai}, \binits{Q.}}:
\batitle{{Fourier ptychographic microscopy using a generalized Anscombe
  transform approximation of the mixed Poisson-Gaussian likelihood}}.
\bjtitle{Optics express}
\bvolume{25}(\bissue{1}),
\bfpage{168}--\blpage{179}
(\byear{2017})
\end{barticle}
\endbibitem

\bibitem{chen2023generalized}
\begin{botherref}
\oauthor{\bsnm{Chen}, \binits{Z.}},
\oauthor{\bsnm{Zhou}, \binits{Y.}},
\oauthor{\bsnm{Liang}, \binits{Y.}},
\oauthor{\bsnm{Lu}, \binits{Z.}}:
{Generalized-Smooth Nonconvex Optimization is As Efficient As Smooth Nonconvex
  Optimization}.
arXiv preprint arXiv:2303.02854
(2023)
\end{botherref}
\endbibitem

\bibitem{le2007variational}
\begin{barticle}
\bauthor{\bsnm{Le}, \binits{T.}},
\bauthor{\bsnm{Chartrand}, \binits{R.}},
\bauthor{\bsnm{Asaki}, \binits{T.J.}}:
\batitle{{A variational approach to reconstructing images corrupted by Poisson
  noise}}.
\bjtitle{Journal of mathematical imaging and vision}
\bvolume{27}(\bissue{3}),
\bfpage{257}--\blpage{263}
(\byear{2007})
\end{barticle}
\endbibitem

\bibitem{loock2014phase}
\begin{barticle}
\bauthor{\bsnm{Loock}, \binits{S.}},
\bauthor{\bsnm{Plonka}, \binits{G.}}:
\batitle{{Phase retrieval for Fresnel measurements using a shearlet sparsity
  constraint}}.
\bjtitle{Inverse Problems}
\bvolume{30}(\bissue{5}),
\bfpage{055005}
(\byear{2014})
\end{barticle}
\endbibitem

\end{thebibliography}


\end{document}